\newtheorem{theorem}{Theorem}[section]
\newtheorem{lemma}[theorem]{Lemma}
\theoremstyle{definition}
\newtheorem{remark}[theorem]{Remark}
\newtheorem{definition}[theorem]{Definition}
\def\div{\mathop{\mathrm{div}}\nolimits}
\def\!{\mathop{\mathrm{!}}}
\def\Prob{\mathop{\mathrm{Prob}}}
\def\grad{\mathop{\mathrm{grad}}}
\newcommand{\Keywords}[1]{\par\indent
{\small{\textbf{Key words and phrases.} \/} #1}}
\def\R{\mathbf{ R}}
\def\J{\mathbf{J}}
\def\P{\mathcal{P}}
\def\D{\mathcal{D}}
\def\M{\mathcal{M}}
\def\T{\mathcal{T}}
\def\aL{\mathsf L}
\def\aM{\mathsf M}
\def\aE{\mathsf E}
\def\aS{\mathsf S}
\def\aZ{\mathsf Z}
\def\az{\mathsf z}
\def\aF{\mathsf F}
\def\aG{\mathsf G}
\def\ad{{\mathsf d}}
\def\mS{\mathcal S}
\def\mE{\mathcal E}
\def\mH{\mathcal H}
\def\cD{\mathcal D}
\def\dual#1#2{\left\langle#1,#2\right\rangle}
\DeclareMathOperator\supp{supp}
\DeclareMathOperator\id{id}
\def\Expectation{\mathbb{E}}
\DeclareMathOperator\law{law}
\newlength{\boxwidth}
\title{GENERIC formalism of a Vlasov-Fokker-Planck equation and connection to  large-deviation principles}
\author{Manh Hong Duong$^1$ \and Mark A. Peletier$^{1,2}$ \and Johannes Zimmer$^3$}
\date{\today}
\begin{document}
\maketitle

\begin{abstract}
In this paper we discuss the connections between a Vlasov-Fokker-Planck equation and an underlying microscopic particle system, and we interpret those connections in the context of the GENERIC framework (\"Ottinger 2005). This interpretation provides (a) a variational formulation for GENERIC systems, (b) insight into the origin of this variational formulation, and (c) an explanation of the origins of the conditions that GENERIC places on its constitutive elements, notably the so-called degeneracy or non-interaction conditions. This work shows how the general connection between large-deviation principles on one hand and gradient-flow structures on the other hand extends to non-reversible particle systems.
\end{abstract}
\Keywords{Vlasov-Fokker-Planck equation, Large deviation principle, GENERIC, Variational principle.}
\footnotetext[1]{Department of Mathematics and Computer Sciences, Technische Universiteit Eindhoven}
\footnotetext[2]{Institute for Complex Molecular Systems, Technische Universiteit Eindhoven}
\footnotetext[3]{Department of Mathematical Sciences, University of Bath}

\section{Introduction}
\label{sec:intro}

\subsection{Overview}
The framework GENERIC~\cite{Oettinger05} provides a systematic method to derive thermodynamically consistent evolution equations. It was originally introduced in the context of complex fluids~\cite{OG97,OG97part2}, and more recently has been applied to anisotropic inelastic solids~\cite{HutterTervoort08a}, to viscoplastic solids~\cite{HutterTervoort08b}, and thermoelastic dissipative materials~\cite{Mielke11}. The key ingredients of GENERIC are its building blocks: a Poisson operator $\aL$, a dissipative operator $\aM$, an energy functional $\aE$, and an entropy functional $\aS$, which are required to satisfy certain properties. Although many equations have been shown to have a GENERIC structure, two important aspects have not been addressed.

The first one is the relationship between the GENERIC framework on one hand and large deviations of underlying microscopic particle systems on the other. It is well-known that many deterministic evolution equations can be derived as hydrodynamic limits of a stochastic particle system. More recently it has become clear the connection between  particle systems and their upscaled limits runs deeper: gradient-flow structures of the limit equations arise as characterizations of the large-deviation behaviour of the stochastic particle systems, thus explaining amongst other things the origin of the Wasserstein gradient flows~\cite{ADPZ11,ADPZ12,Laschos2012,Peletier2011,Renger13TH}. In this paper we generalize this relationship beyond gradient flows to an example from the class of GENERIC systems.

The second aspect is a variational structure for GENERIC systems. The study of variational structure has important consequences for the analysis of an evolution equation. It provides general methods for proving well-posedness~\cite{AGS08} and characterizing large-time behaviour~(e.g.,~\cite{CarrilloMcCannVillani03}), gives rise to natural numerical discretizations (e.g.,~\cite{DuringMatthesMilisic10}), and creates handles for the analysis of singular limits (e.g.,~\cite{SandierSerfaty04,Stefanelli08,AMPSV12}). The appearance of the concepts of energy and entropy in the formulation of GENERIC suggests a strong variational connection, but to date this has not been made explicit. In this paper we exhibit such a variational structure, and as in the case of the gradient flows, this structure is intimately tied to the large-deviation behaviour of an underlying system.

In this paper we treat some of these questions in full generality, that is, for a general, abstract GENERIC system. Because of this generality the treatment is necessarily formal. We illustrate the abstract features with a specific system, that of the \emph{Vlasov-Fokker-Planck equation}, for which the large-deviation behaviour has been proved rigorously. This gives a specific case in which the impact of the abstract arguments can be recognized. We first introduce  the specific example and then explain the GENERIC framework in detail.

\subsection{A Vlasov-Fokker-Planck equation and its generalisation}
\label{subsec:VFP}

The central example of this paper will be the following \emph{Vlasov-Fokker-Planck (VFP)} equation,
\begin{equation}
\label{eq:VFP}
\partial_t \rho = - \div_q\Bigl(\rho \frac pm\Bigr) + \div_p  \rho \Bigl(\nabla_q V + \nabla_q \psi * \rho + \gamma\frac pm \Bigr)+\gamma \theta\Delta_p \rho.
\end{equation}
The spatial domain is $\R^{2d}$ with coordinates $(q,p)$, with $q$ and $p$ each in $\R^d$. We use subscripts as in $\div_q$ and $\Delta_p$ to indicate that the differential operators act only on those variables. The unknown is a time-dependent probability measure $\rho\colon[0,T]\to \P(\R^{2d})$; the functions $V = V(q)$ and $\psi = \psi(q)$ are given, as are the positive constants $\gamma$, $m$, and $\theta$. The convolution $\psi * \rho$ is defined by $(\psi\ast\rho)(q)=\int_{\R^{2d}}\psi(q-q')\rho(q',p')\,dq'dp'$.

Equation~\eqref{eq:VFP} arises as the many particle limit of a collection of \emph{interacting Brownian particles with inertia}~\cite{Brown}, given by the following stochastic differential equation
\begin{subequations}
\label{eq:SDE}
\begin{align}
dQ_i(t)&=\frac{P_i(t)}{m}\,dt,
\\dP_i(t)&=-\nabla V(Q_i(t))\,dt - \sum_{\substack{j=1}}^n \nabla \psi(Q_i(t)-Q_j(t))-\frac{\gamma}{m}P_i(t)\,dt+\sqrt{2\gamma \theta }\,dW_i(t).
\end{align}
\end{subequations}
Here $Q_i$ and $P_i$ are the position and momentum of particle $i=1,\dots,n$, with mass $m$, and the equations describe the movement of this particle under a fixed potential $V$, an interaction potential $\psi$,  a friction force (the drift term $-\gamma P_idt/m$) and a stochastic forcing described by the $n$ independent $d$-dimensional Wiener measures~$W_i$.

Both the friction force and the noise term arise from collisions with the solvent, and the parameter~$\gamma$ in both terms characterizes the intensity of these collisions. The parameter $\theta=kT_a$, where $k$ is the Boltzmann constant and $T_a$ is the absolute temperature, measures the mean kinetic energy of the solvent molecules, and therefore characterizes the magnitude of the collision noise.  Typical applications of this system are for instance  as a simplified model for chemical reactions, or as a model for particles interacting through Coulomb or gravitational forces.

Equation~\eqref{eq:VFP} is the many limit of the SDE~\eqref{eq:SDE}, also known as the hydrodynamic limit, in the sense that as $n\to\infty$, the \emph{empirical measure}
\[
\rho_n(t)\colonequals \frac1n \sum_{i=1}^n\delta_{(Q_i(t),P_i(t))}
\]
converges almost surely to the solution of~\eqref{eq:VFP} with appropriate initial data. Equation~\eqref{eq:VFP} has been extensively studied, especially in the case in which $\psi$ is a Coulomb or gravitational potential. The central difficulty in these works is the singularity of $\psi$. For our purposes, this issue is not important, and we will simply assume that $\psi$ is bounded, thus eliminating  difficulties in proving existence and uniqueness.


\bigskip

Although we prove a rigorous result, Theorem~\ref{theo:LDP}, the main statement of this paper is not Theorem~\ref{theo:LDP}; the main statement is the general structure that Theorem~\ref{theo:LDP} strongly suggests, which extends much further than the example above, and which connects to the GENERIC structure that we describe below. Because of this suggestion of a general structure, we now describe a generalized version of the Vlasov-Fokker-Planck equation in somewhat more abstract terms.

\bigskip

Let $\mH,\mS\colon \P(\R^{2d})\rightarrow \R$ be two functionals on $\P(\R^{2d})$. Denote by $\grad \mH$ and $\grad \mS$ the $L^2$-gradient of $\mH$ and $\mS$, otherwise known as the variational derivative.

The following equation we call a \emph{generalized Vlasov-Fokker-Planck equation},
\begin{equation}
\label{eq:generalizedKramers}
\partial_t\rho=\div(\rho \J\nabla \grad \mH) +\div\bigl(\rho\sigma\sigma^T\nabla \grad (\mH+\mS)\bigr),
\end{equation}
where $\nabla$ and $\div$ are the gradient and divergence operators with respect to the full spatial variable ${x} = (q,p)\in \R^{2d}$, and $\J$ is the $2d\times 2d$ skew symmetric block matrix
\begin{equation}
\label{def:symplecticMatrixJ}
\J =\begin{pmatrix}
0 &-I_d \\
I_d& 0
\end{pmatrix},
\end{equation}
where $I_d$ is the $\R^{d\times d}-$identity matrix.

The Vlasov-Fokker-Planck equation~\eqref{eq:VFP} is an example of this abstract equation, in which
\begin{subequations}
\begin{alignat}2
\label{def:VFPMeasureStructure.1}
&{x}=(q,p)^T\in \R^{2d},&\quad&\mH(\rho)=\int_{\R^{2d}}\left(\frac{p^2}{2m}+V(q)+\frac{1}{2}(\psi\ast\rho)(q)\right)\, \rho(dqdp),\\
&\sigma=\sqrt{\gamma}
\begin{pmatrix}
0&0\\
0&I_d
\end{pmatrix}, &\qquad& \mS(\rho)=\theta\int_{\R^{2d}}\rho\log\rho\,dqdp.
\label{def:VFPMeasureStructure.2}
\end{alignat}
\end{subequations}
Other well-known equations are of the same form; the \emph{Kramers equation}~\cite{Kramers40} is equation~\eqref{eq:VFP} with $\psi\equiv0$, and Wasserstein gradient flows~\cite{Otto01} are of the form~\eqref{eq:generalizedKramers} with $\sigma = I_{2d}$ and $\mH = 0$.
As a final example, when
\[
\sigma=I_{2d},\quad \mE(\rho)=\frac{1}{2}\int_{\R^{2d}} \rho (\psi\ast\rho), \quad \mS(\rho)=\theta\int_{\R^{2d}}\rho\log \rho,
\]
equation~\eqref{eq:generalizedKramers} becomes
\[
\partial_t\rho=\div(\rho \J \nabla\psi\ast\rho)+ \theta\Delta\rho+\div(\rho\nabla\psi\ast\rho).
\]
This equation describes the relaxation of a point vortex towards statistical equilibrium, that arises in the kinetic theory of point vortices. It is closely related to the two-dimensional Navier-Stokes equation \cite{Chav01,CPR08,CPR09, FSS12}.

\subsection{GENERIC}
\label{subsec:GENERIC}
We now switch gears and introduce an abstract equation structure. Later we will connect the example above with this structure.

A GENERIC equation (General Equation for Non-Equilibrium Reversible-Irreversible Coupling~\cite{Oettinger05}) for an unknown $\az$ in a state space $\aZ$ is a mixture of both reversible  and dissipative dynamics:
\begin{equation}
\label{eq:GENERICeqn1}
\partial_t\az=\aL\,\ad \aE+\aM\,\ad \aS.
\end{equation}
Here
\begin{itemize}
\item $\aE, \aS\colon \aZ\rightarrow\R$ are interpreted as energy and entropy functionals,
\item $\ad \aE, \ad \aS$ are appropriate derivatives of $\aE$ and $\aS$ (such as either the Fr\'echet derivative or a gradient with respect to some inner product);
\item $\aL= \aL(\az)$ is for each $\az$ an antisymmetric operator satisfying the Jacobi identity
\begin{equation}
\label{def:Jacobi}
\{\{\aF_1,\aF_2\}_{\aL},\aF_3\}_{\aL}+\{\{\aF_2,\aF_3\}_{\aL},\aF_1\}_{\aL}+\{\{\aF_3,\aF_1\}_{\aL},\aF_2\}_{\aL}=0,
\end{equation}
for all functions $\aF_i\colon\aZ\rightarrow\R,\ i=1,2,3$,  where the Poisson bracket $\{\cdot,\cdot\}_{\aL}$ is defined via
\begin{equation}
\label{def:bracket}
\{\aF,\aG\}_{\aL}\colonequals\ad \aF\cdot \aL\,\ad \aG
\end{equation}
(see Remark~\ref{rem:differentiability} for a discussion of the meaning of the `dot' here).
\item $\aM=\aM(\az)$ is  symmetric and positive semidefinite.
\end{itemize}
Moreover, the building blocks $\{\aL,\aM,\aE,\aS\}$ are required to fulfill the \emph{degeneracy conditions}: for all $\az\in \aZ$,
\begin{equation}
\label{def:degeneracy}
\aL\,\ad\aS=0,\quad
\aM\,\ad\aE=0.
\end{equation}
As a consequence of these properties, energy is conserved along a solution, and entropy is non-decreasing:
\begin{align*}
\frac{d\aE(\az(t))}{dt}&=\ad \aE\cdot \frac{d\az}{dt}=\ad \aE\cdot\left(\aL\,\ad \aE+\aM\,\ad \aS\right)=0,
\\\frac{d\aS(\az(t))}{dt}&=\ad \aS\cdot \frac{d\az}{dt}=\ad \aS\cdot\left(\aL\,\ad \aE+\aM\,\ad \aS\right)=\ad \aS\cdot\aM\,\ad \aS\geq0.
\end{align*}
A GENERIC system is then fully characterized by $\{\aZ,\aE,\aS,\aL,\aM\}$.

\begin{remark}
\label{rem:differentiability}
In equation~\eqref{eq:GENERICeqn1} we implicitly have assumed that $\aZ$ is a space with a differentiable structure, in which time derivatives $\partial _t\az$ and state-space derivatives $\ad \aS$ and $\ad \aE$ exist. In many cases of importance, including the main example of this paper,  this is not true, and then generalizations are necessary; the book by Ambrosio, Gigli and Savar\'e~\cite{AGS08} is an example of such generalizations in the case of gradient flows. Nonetheless, we feel that the formal differentiable way of writing provides the right intuition, and therefore in this formal part of the paper we maintain this way of writing  the system.

Even in the smooth setting, we  have not made specific exactly which derivative $\ad \aE$ and $\ad \aS$ should be, and let us briefly make the situation concrete. Derivatives of the functionals $\aE$ and $\aS$ are naturally defined as covectors, i.e. elements of the cotangent space (they are then called differentials) or dual space (called Fr\'echet derivatives). Since $\partial_t\az$ is an element of the tangent or primal space, $\aL$ and $\aM$ should be duality maps, mapping cotangent to tangent spaces, or equivalently dual to primal spaces. In this case the meaning of the dot in~\eqref{def:bracket} is that of the duality pairing.

In practice, however, it often is more convenient to use gradients rather than differentials: then the covectorial derivative is mapped to a tangent vector by some fixed duality mapping, associated with an inner product, often only formally. In all of the explicit calculations in this paper this will be the case; for instance, we already used the $L^2(\R^{2d})$ structure as a formal inner product on the space of measures $\P(\R^{2d})$ to define `$\grad \mH$' in equation~\eqref{eq:generalizedKramers}. In this situation $\aL$ and $\aM$ map vectors to vectors, and the dot in~\eqref{def:bracket} is that of the formal inner product.
\end{remark}

\subsection{Overview}

As described in the introduction, the aim of this paper is twofold: to connect the GENERIC structure with large deviations of stochastic processes, and to construct a useful variational formulation for an abstract GENERIC equation.

In this context, the role of the VFP equation~\eqref{eq:VFP} is that of a guiding example. In brief, the story runs as follows: with some modification,  the VFP equation can be written as a GENERIC system. In addition, the VFP equation has a particle background, and a recent large-deviation result allows us to connect the large deviations of the particle system with the GENERIC structure. Finally, this same connection shows how the VFP equation can be given a variational formulation.

The first part of this story is told in Section~\ref{sec:LDPVFP}, in which we construct a large-deviation principle for the SDE~\eqref{eq:SDE} associated with the VFP equation. Next, in Section~\ref{sec:GENERICVFP} we construct a GENERIC structure for the VFP equation and reformulate the large-deviation rate function in this context. Finally, in Section~\ref{sec:variational} we deduce from the large-deviation result a variational formulation for the VFP equation and more generally for any GENERIC system.

Having connected the GENERIC structure with particle systems and large deviations, in Section~\ref{sec:properties} we use this connection to understand the origin and interpretation of the various properties of GENERIC listed in Section~\ref{subsec:GENERIC}. Section~\ref{sec:generalization} is devoted to the generalization~\eqref{eq:generalizedKramers}. We conclude with some comments.

\section{Main results 1: Large deviations for the VFP equation}
\label{sec:LDPVFP}

For many gradient-flow systems it is now understood that the gradient-flow structure itself arises from the fluctuation behaviour of an underlying stochastic process~\cite{DG89,ADPZ11,ADPZ12,Laschos2012,Peletier2011,DuongLaschosRenger12TR,Renger13TH}. The theory of large deviations allows one to make this statement precise. We now apply the same ideas to the VFP equation.

\medskip

We first specify our conditions on the functions $\psi$ and $V$. Since we are interested in presenting ideas rather than obtaining the most general results, we choose fairly restrictive conditions on $V$ and $\psi$ to eliminate technical complications:
\begin{subequations}
\label{cond:Vpsi}
\begin{align}
&V \in C^2(\R^d) \text{ with globally bounded second derivatives, and } V\geq0;
\\
&\psi \in C^2(\R^d) \text{ with globally bounded first and second derivatives, and }\psi\geq0.
\end{align}
In addition, we assume that the initial datum $\rho^0$ satisfies
\begin{equation}
\rho^0\in \P(\R^{2d}) \text{ with } \mH(\rho^0) < \infty,
\end{equation}
where $\mH$ is defined in~\eqref{def:VFPMeasureStructure.1}.
\end{subequations}
With these assumptions,
\begin{itemize}
\item Given a deterministic starting position, the stochastic differential equation~\eqref{eq:SDE} has strong solutions that are weakly unique (see e.g.~\cite[Chapter~3]{KaratzasShreve91}) and non-explosive (e.g.~\cite{Wu01});
\item The VFP equation~\eqref{eq:VFP} is well-defined in the distributional sense and has a unique distributional solution with initial datum $\rho^0$. We do not know of an explicit reference for this, but the boundedness of $\nabla\psi$ implies that standard arguments apply. For instance, existence of mild solutions, defined by the variation-of-constants formula, can be proved using an explicit fundamental solution~\cite{Chandrasekhar43,VictoryODwyer90} of  the differential operator
\[
L\rho\colonequals\partial_t \rho + \div_q\Bigl(\rho \frac pm\Bigr) -\gamma \theta\Delta_p \rho.
\]
Uniqueness follows from an application of Gronwall's inequality.
\end{itemize}

Given a realization $\{(Q_i,P_i)_{i=1}^n\}$ of the particle system~(\ref{eq:SDE}), we define the \emph{empirical measure}
\[
\rho_n\colon [0,\infty) \to \P(\R^{2d}), \qquad \rho_n(t) := \frac1n \sum_{i=1}^n \delta_{(Q_i,P_i)(t)}.
\]
Theorem~\ref{theo:LDP} below states that the random variable $\rho_n$ satisfies a \emph{large-deviation principle} as $n\to\infty$.

\begin{definition}\textbf{(A large-deviation principle~\cite{dH00,DemboZeitouni98,FK06})}
Let $\M$ be a complete separable metric space and $\{\mu_n\}$ be a sequence of probability measures on $\M$. We say that $\{\mu_n\}$ satisfy a large deviation principle with a rate functional $I\colon \M\rightarrow [0,\infty)$ if
\begin{enumerate}
\item  For each open set $A \subset \M$, $\liminf_{n\rightarrow\infty }\frac{1}{n}\log \mu_n(A)\geq -\inf_{x\in A}I(x)$;
\item  For each closed set $B \subset \M$, $\limsup_{n\rightarrow\infty }\frac{1}{n}\log \mu_n(B)\leq -\inf_{x\in B}I(x)$.
\end{enumerate}
The rate functional $I$ is said to be \emph{good} if its sub-level sets $\left\{x\in \M\big\vert I(x)\leq a\right\}$ are compact for all $a\geq 0$.
Morally, this definition describes the property that
\[
\mu_n (A) \sim \exp\bigl(-n\inf_A I\bigr) \qquad\text{as }n\to\infty.
\]
We refer to~\cite{dH00,DemboZeitouni98,FK06} for more information on large deviation theory.
\end{definition}

For the theorem below we equip $\P(\R^{2d})$ with the weak or narrow topology, generated by the duality with $C_b(\R^{2d})$, so that the space $C([0,T];\P(\R^{2d}))$ consists of narrowly continuous curves in $\P(\R^{2d})$.

Define for $\nu\in\P(\R^{2d})$ the parametrized generator
\begin{align*}
A_\nu &\colon D(A_\nu) \subset C_b(\R^{2d})\to C_b(\R^{2d}),\\
A_\nu f &\colonequals \frac pm \cdot \nabla _p f - \Bigl[\nabla_q V + \nabla_q \psi*\nu + \gamma\frac pm\Bigr]\cdot \nabla_p f +\gamma \theta \Delta_p f.
\end{align*}
Note that equation~\eqref{eq:VFP} can be written in terms of the transpose $A^\tau$ as
\[
\partial_t \rho_t = A_{\rho_t}^\tau \rho_t.
\]

For the formulation of the rate function we will also need the concept of absolute continuity in distributional sense. For a compact set $K\subset \R^{2d}$, the space $\cD_K$ is the set of all $f\in C_c^\infty(\R^{2d})$ with $\supp f\subset K$; the set $\cD$ is the union of all $\cD_K$, with the usual test-function topology.

\begin{definition}
A curve $[0,T]\ni t\mapsto \rho_t \in \P(\R^{2d})$ is called \emph{absolutely continuous in distributional sense} if it has the following property: for each compact $K\subset \R^{2d}$ there exists a neighbourhood $U_K$ of $0$ in $\cD_K$ and an absolutely continuous function $G_K:[0,T]\to\R$ such that
\[
\forall\, 0\leq t_1\leq t_2\leq T, \ \forall f\in U_K: \qquad
|\dual{\rho_{t_1}}f - \dual{\rho_{t_2}}f| \leq |G_K(t_1)-G_K(t_2)|.
\]
The set of all such curves is denoted $AC([0,T];\P(\R^{2d}))$.
If $\rho$ is absolutely continuous, then for almost all $t\in [0,T]$ the time derivative
$\partial_t \rho_t $ exists in $\cD'(\R^{2d})$. The proof of this and other properties of this concept can be found in~\cite[Section~4]{DG87}.
\end{definition}

Finally, we define the norm that will measure the magnitude of fluctuations:
\begin{definition}
Fix $\rho\in \P(\R^{2d})$. For any distribution $\T\in \cD'(\R^{2d})$ define
\begin{equation}
\label{def:minusonenorm}
\|\T\|_{-1,\rho}^2 := \sup_{f\in C_c^\infty(\R^{2d})} 2 \dual \T f - \int_{\R^{2d}} |\nabla_p f|^2 \, d\rho.
\end{equation}
\end{definition}

Define $L^2_\nabla (\rho)$ as the completion of $\{\nabla_p f: f\in C_c^\infty(\R^{2d})\}$ with respect to the norm
\[
\|\cdot\|_{\rho}^2  := \int_{\R^{2d}} |\cdot|^2\, d\rho.
\]
Note that, depending on $\rho$,  $\|\cdot\|_{\rho}$ may be only a seminorm and not a norm; but since the completion identifies elements that have zero distance in this seminorm, $L^2_\nabla(\rho)$ is a well-defined Hilbert space. Its elements are equivalence classes of measurable functions that are $\rho$-a.e. equal.
Also note that whenever $\mH(\rho)<\infty$, the function $(q,p)\mapsto p$ belongs to $L^2_\nabla(\rho)$.

The dual norm $\|\cdot\|_{-1,\rho}$ has an explicit representation:
\begin{lemma}
\label{lemma:charminusone}
\[
\|\T\|_{-1,\rho}^2 =
\begin{cases}
\displaystyle\int_{\R^{2d}} |h|^2 \, d\rho
& \text{if } \T = -\div_p (\rho h)  \text{ with } h  \in L^2_\nabla(\rho) ,\\
+\infty & \text{otherwise}.
\end{cases}
\]
\end{lemma}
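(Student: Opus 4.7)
The claim has the flavour of a Legendre/Fenchel duality identification: the $-1$-norm is defined as a sup of an affine minus a quadratic in $\nabla_p f$, and the announced formula is exactly the value of the conjugate of $\tfrac12\|\cdot\|_\rho^2$ on $L^2_\nabla(\rho)$. I would therefore split the proof into the two implications and use a completion-of-squares together with Riesz representation on $L^2_\nabla(\rho)$.

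First I would prove the ``easy'' direction: if $\T=-\div_p(\rho h)$ with $h\in L^2_\nabla(\rho)$, then for every $f\in C_c^\infty(\R^{2d})$ one has $\dual \T f=\int \rho\, h\cdot\nabla_p f$, and completing the square gives
\[
2\dual \T f-\int |\nabla_p f|^2\,d\rho
\;=\;\int|h|^2\,d\rho-\int|h-\nabla_p f|^2\,d\rho.
\]
The first term is fixed, and by the very definition of $L^2_\nabla(\rho)$ as the closure of $\{\nabla_p f:f\in C_c^\infty\}$ in the seminorm $\|\cdot\|_\rho$, the second term can be made arbitrarily small. Taking the sup over $f$ yields $\|\T\|_{-1,\rho}^2=\int|h|^2\,d\rho$.

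For the reverse direction, assume $\|\T\|_{-1,\rho}^2<\infty$. Replacing $f$ by $\lambda f$ and optimizing over $\lambda\in\R$ in the definition gives the Cauchy--Schwarz-type bound
\[
\dual \T f^{\,2}\;\leq\;\|\T\|_{-1,\rho}^2\int|\nabla_p f|^2\,d\rho
\qquad\forall f\in C_c^\infty(\R^{2d}).
\]
Two things follow. First, well-definedness: if $\nabla_p f=0$ in $L^2_\nabla(\rho)$ then $\dual \T f=0$, so the map $\nabla_p f\mapsto \dual \T f$ is an unambiguous linear functional on the dense subspace $\{\nabla_p f:f\in C_c^\infty\}$ of $L^2_\nabla(\rho)$. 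Second, continuity: the same bound says it is bounded in the $\|\cdot\|_\rho$-norm. By density it extends uniquely to a continuous linear functional on the Hilbert space $L^2_\nabla(\rho)$, and the Riesz representation theorem produces $h\in L^2_\nabla(\rho)$ with
\[
\dual \T f=\int_{\R^{2d}} h\cdot\nabla_p f\,d\rho
\quad\text{for all } f\in C_c^\infty(\R^{2d}),
\]
which is exactly $\T=-\div_p(\rho h)$ in $\cD'(\R^{2d})$. Applying the already-proved forward direction to this representation gives the claimed value of the norm, and closes the ``otherwise $+\infty$'' dichotomy by contrapositive.

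The only genuinely delicate point is the well-definedness of the functional on the quotient/completion that defines $L^2_\nabla(\rho)$: one must be careful that equivalence classes in this seminorm are respected by $\T$, which is what the $\lambda$-scaling argument secures. Everything else is routine Hilbert space manipulation.
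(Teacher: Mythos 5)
Your proposal is correct and follows essentially the same route as the paper: the $\lambda$-scaling argument to get the Cauchy--Schwarz bound, boundedness on the dense subspace $\{\nabla_p f\}$, extension by density, and the Riesz representation theorem on $L^2_\nabla(\rho)$. You are somewhat more explicit than the paper (which compresses both directions into ``Riesz implies the assertion''), in particular in spelling out the completion-of-squares computation for the forward direction and the well-definedness of the functional on equivalence classes, but the underlying argument is the same.
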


\begin{proof}
Results of this type are common; this argument is adapted from~\cite{DG87}.

Since there is a one-to-one correspondence between $f\in C_c^\infty(\R^{2d})$ and $\nabla_p f\in L:=\{\nabla_p f: f\in C_c^\infty(\R^{2d})\}$, $\T$ can be considered to be a linear functional on $L$. If $\|\T\|_{-1,\rho}<\infty$, we can replace $f$ by $\lambda f$ and optimize with respect to $\lambda \in \R$ in~\eqref{def:minusonenorm}. We then find that
\[
\left|\dual \T f\right|\leq \|\T\|_{-1,\rho} \|\nabla_pf\|_{\rho}.
\]
Therefore $\T$ is  bounded with respect to the $L^2_\nabla(\rho)$-norm; it can be uniquely extended to a bounded linear functional on the whole of $L^2_\nabla(\rho)$, and  Riesz' representation theorem  implies the assertion of the Lemma.
\end{proof}

We can now state the large-deviation principle.
\begin{theorem}
\label{theo:LDP}
Assume that the initial data $(Q_i(0),P_i(0))$, $i=1,\dots,n$ are deterministic and chosen such that $\rho_n(0)\rightharpoonup \rho^0$ for some $\rho^0\in \P(\R^{2d})$. Then the empirical process $\{\rho_n\}$ satisfies a large-deviation principle in the space $C([0,T],\P(\R^{2d}))$, with good rate function
\begin{equation}
\label{def:I}
I(\rho) = \begin{cases}
\displaystyle \frac1{4\gamma \theta} \int_0^T \bigl\|\partial_t \rho_t - A_{\rho_t}^\tau \rho_t\bigr\|_{-1,\rho_t}^2 \, dt
&\text{if } \rho\in AC([0,T];\P(\R^{2d}))\text{ with }\rho|_{t=0}= \rho^0,\\
+\infty &\text{otherwise}.
\end{cases}
\end{equation}
The rate function $I$ can  also be written as
\begin{equation}
\label{rem:alternativeCharacterization}
I(\rho)=\begin{cases}
\displaystyle\frac1{4\gamma \theta} \int_0^T \int_{\R^{2d}}|h_t|^2 \,d\rho_t dt
\qquad & \text{if }\partial_t\rho_t=A_{\rho_t}^\tau\rho_t -\div_p(\rho_t h_t),
\text{ for }h\in L^2(0,T;L^2_\nabla(\rho_t)),\\
+\infty & \text{otherwise}.
\end{cases}
\end{equation}
\end{theorem}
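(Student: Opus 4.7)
The plan is to follow the classical Dawson--G\"artner route for weakly interacting McKean--Vlasov diffusions~\cite{DG87}, most efficiently phrased through exponential martingales and convergence of nonlinear generators (Feng--Kurtz). The empirical measure $\rho_n(\cdot)$ is itself a Markov process on $\P(\R^{2d})$ whose exponential generator admits a clean limit as $n\to\infty$, and the rate function then falls out by Legendre duality against this limit Hamiltonian.

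First I would compute, using It\^o's formula on the SDE~\eqref{eq:SDE}, the exponential martingale
\[
Z_f^n(t) := \exp\Bigl(n[\dual{f}{\rho_n(t)}-\dual{f}{\rho_n(0)}] - n\int_0^t\Bigl[\dual{A_{\rho_n(s)}f}{\rho_n(s)} + \gamma\theta\int|\nabla_p f|^2 d\rho_n(s)\Bigr]ds\Bigr)
\]
for $f\in C_c^\infty(\R^{2d})$, which has mean one and identifies the pointwise Hamiltonian $H(\rho,f)=\dual{A_\rho f}\rho+\gamma\theta\int|\nabla_p f|^2\,d\rho$; a routine extension to time-dependent $f$ via integration by parts provides the space-time version. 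Exponential tightness of $\{\rho_n\}$ in $C([0,T];\P(\R^{2d}))$ would then be established from (i) uniform exponential moment bounds on the second moments of $\rho_n(t)$, provided by $\mH(\rho^0)<\infty$ and the fact that $\mH$ grows at most linearly in time along the flow, together with (ii) oscillation estimates obtained by exponential Chebyshev applied to $Z_f^n$ for suitable cut-off functions $f$, as in~\cite{DG87,FK06}.

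The upper bound of the LDP now follows from exponential Chebyshev applied to $Z_f^n$ and an optimisation over $f$, yielding
\[
I(\rho)=\sup_{f(t,\cdot)} \int_0^T\Bigl[\dual{\partial_t\rho_t-A_{\rho_t}^\tau \rho_t}{f_t}-\gamma\theta\int|\nabla_p f_t|^2\,d\rho_t\Bigr]dt.
\]
Pointwise in $t$, the scaling $f\mapsto\lambda f$ combined with the definition~\eqref{def:minusonenorm} shows that this supremum equals $\tfrac{1}{4\gamma\theta}\|\partial_t\rho_t-A_{\rho_t}^\tau\rho_t\|_{-1,\rho_t}^2$, producing the form~\eqref{def:I}. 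The matching lower bound comes from an absolutely continuous change of measure: for any $\rho\in AC([0,T];\P(\R^{2d}))$ with $I(\rho)<\infty$, Lemma~\ref{lemma:charminusone} supplies $h_t\in L^2_\nabla(\rho_t)$ with $\partial_t\rho_t=A_{\rho_t}^\tau\rho_t-\div_p(\rho_t h_t)$, and perturbing the $P_i$-drift by $\sqrt{2\gamma\theta}\,h_t(Q_i,P_i)$ via Girsanov produces a tilted particle system whose empirical measure converges to $\rho$ with tilt cost exactly $\tfrac{1}{4\gamma\theta}\int_0^T\int|h_t|^2 d\rho_t\,dt$. This simultaneously proves the equivalence of~\eqref{def:I} and~\eqref{rem:alternativeCharacterization}.

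The main obstacle is the nonlinear self-interaction through $\nabla_q\psi\ast\rho_n$: it couples every exponential estimate back to $\rho_n$ itself, so that Sanov's theorem for i.i.d.\ paths cannot be applied directly. The restrictive hypothesis~\eqref{cond:Vpsi} on the boundedness of the first and second derivatives of $\psi$ is exactly what makes $\nu\mapsto\nabla_q\psi\ast\nu$ narrowly continuous and uniformly bounded. This continuity is needed both to pass to the limit $H(\rho_n,f)\to H(\rho,f)$ along narrowly converging subsequences (delivering a sharp upper bound in terms of the limit functional) and to verify Novikov's condition for the Girsanov density used in the lower bound.
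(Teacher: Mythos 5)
Your proposal takes a genuinely different route from the paper. The paper obtains the large-deviation principle itself, together with a first (stochastic-control) characterization $\widetilde I$ of the rate function, as a black-box consequence of Theorem~3.1 and Remark~3.2 of Budhiraja, Dupuis and Fischer (2012), a weak-convergence result; the actual work in the paper is the identification of the control-theoretic functional $\widetilde I$ with the analytic form $I=\sup_f J(\rho,f)$, carried out by a martingale computation for $\widetilde I\geq I$ and by a Girsanov construction of an explicit controlled McKean--Vlasov process for $\widetilde I\leq I$. Your plan instead re-derives the LDP from scratch via exponential martingales and nonlinear generator convergence in the Dawson--G\"artner/Feng--Kurtz style, then extracts $I$ by Legendre duality against the limit Hamiltonian. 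Both routes arrive at the same variational characterization $I(\rho)=\sup_f J(\rho,f)$, and your closing Girsanov tilt of the $n$-particle system parallels the paper's converse inequality, differing in that the paper tilts a single nonlinear process under $P_h$ rather than the full $n$-particle system; both rely on Lemma~\ref{lemma:charminusone} to extract $h$.

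The genuine gaps are in steps (i) and (ii). Dawson--G\"artner as stated requires non-degenerate diffusion (the paper flags this explicitly in a remark), while here the noise acts only in the $p$-direction, so the oscillation and tightness estimates of \cite{DG87} do not apply verbatim. More seriously, your justification for exponential tightness conflates a deterministic statement about the limit equation with the required probabilistic estimate: ``$\mH$ grows at most linearly in time along the flow'' concerns the PDE \eqref{eq:VFP}, whereas exponential tightness in $C([0,T];\P(\R^{2d}))$ on the non-compact state space demands a bound of the type $\sup_n \tfrac1n\log\Expectation\exp\bigl(nc\sup_{t\le T}\int p^2\,d\rho_n(t)\bigr)<\infty$ for some $c>0$ on the particle Hamiltonian $H_n$, which must be proved separately via a Lyapunov/exponential-supermartingale argument using the confining behaviour of $V$. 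The condition $\mH(\rho^0)<\infty$ alone only yields $\rho_n(0)$-level control and gives no exponential moment along the evolution. It is precisely this technical cost that the weak-convergence approach of Budhiraja et al.\ sidesteps, which is why the paper takes that route and concentrates its effort on the identification $\widetilde I = I$. Finally, your lower bound via tilting the $n$-particle system requires a law of large numbers for the tilted interacting system with a merely $L^2(0,T;L^2_\nabla(\rho_t))$ control $h$; establishing this (including the propagation of chaos under the tilt and Novikov for a possibly unbounded $h$) is additional work that the paper's single-process Girsanov argument avoids by leaning on the representation of \cite{BudhirajaDupuisFischer12}.
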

\begin{proof}
Setting $x= (q,p)$ and $b(x,\nu) = \bigl(p/m,-\nabla V(q) - (\nabla\psi*\nu)(q) - \gamma p/m\bigr)$ for $\nu\in\P(\R^{2d})$, the system~\eqref{eq:SDE} can be written as system of weakly interacting diffusions
\begin{equation}
\label{eq:weakly interacting system}
dX_i(t)=b(X_i(t),\rho_n(t))\, dt+ \sigma\,dW_i(t),
\end{equation}
where $W_i$ are  $d$-dimensional standard Wiener processes and for the length of this proof, $\sigma$ is the $2d\times d$ matrix
\[
\sigma = \sqrt{2\gamma\theta} \begin{pmatrix}0\\I_d\end{pmatrix}.
\]
Theorem 3.1 and Remark~3.2 of~\cite{BudhirajaDupuisFischer12} implies that $\rho_n$ satisfies a large-deviation principle with  rate function
\[
\widetilde I(\rho) := \inf \Expectation\left[\frac12 \int_0^T|U_t|^2 \, dt \right],
\]
where the infimum is taken over all processes $(\overline X, U, W)$ taking values in $\R^{2d}\times \R^d\times \R^{d}$ that solve
\begin{subequations}
\label{pb:theo:LDP}
\begin{align}
\label{pb:theo:LDP:1}
&d\overline X_t = b(\overline X_t,\rho_t) \, dt + \sigma U_t\, dt + \sigma \,dW_t,\\
&W \text{ is a standard $d$-dimensional Wiener process},\\
&\law \overline X_t = \rho_t \quad\text{for all }t.
\end{align}
\end{subequations}
For each such triple, for any  $f\in C_c^\infty(\R \times \R^{2d})$ the process
\[
 M_t :=  f_t(\overline X_t) - f_0(\overline X_0) - \int_0^t \bigl[(\partial_s + A_{\rho_s} +( \sigma U_s)\cdot \nabla)f_s\bigr](\overline X_s)\, ds
\]
is a martingale, and therefore $\Expectation M_t=\Expectation M_0=0$ for every $t>0$.

We now show~\eqref{def:I} by showing that $\widetilde I=I$. Define for any $\rho\in C([0,T];\P(\R^{2d}))$ and $f\in C_c^\infty(\R \times \R^{2d})$,
\[
J(\rho,f) := \int_{\R^{2d}}f_T\,d\rho_T - \int_{\R^{2d}} f_0\, d\rho_0
  - \int_0^T \int_{\R^{2d}} \bigl[(\partial_s + A_{\rho_s})f_s\bigr] d\rho_s ds - \gamma\theta \int_0^T \int_{\R^{2d}} |\nabla_p f_t|^2 \, d\rho_tdt.
\]
It is well known (see e.g.~\cite[Lemma~4.8]{DG87}) that
\[
I(\rho) = \sup_{f\in C_c^\infty(\R\times\R^{2d})} J(\rho,f).
\]
We have for any $f\in C_c^\infty(\R\times\R^{2d})$ and for any solution $(\overline X,U,W)$ of~\eqref{pb:theo:LDP},
\begin{align*}
\Expectation\left[\frac12 \int_0^T|U_t|^2 \, dt \right]
= \Expectation\left[\int_0^T\Bigl(U_t\nabla_p f_t(\overline X_t) - \frac12 |\nabla_pf_t(\overline X_t)|^2 \Bigr) \, dt \right]
+ \Expectation\left[\frac12 \int_0^T|U_t - \nabla_p f_t(\overline X_t)|^2 \, dt \right].
\end{align*}
Using $\Expectation M_T=0$ we rewrite this as
\begin{align}
\notag
&  \Expectation\left[ f_T(\overline X_T) - f_0(\overline X_0)
    - \int_0^T \bigl[(\partial_s + A_{\rho_s})f_s\bigr](\overline X_s)\, ds
    - \frac12 \int_0^T |\nabla_pf_s(\overline X_s)|^2  \, ds\right]\\
\notag
 &\qquad\qquad\qquad + \Expectation\left[\frac12 \int_0^T|U_t - \nabla_p f_t(\overline X_t)|^2 \, dt \right]\\
&= J\Bigl(\rho,\frac f{\sqrt{2\gamma\theta}}\Bigr) + \Expectation\left[\frac12 \int_0^T|U_t - \nabla_p f_t(\overline X_t)|^2 \, dt \right].
\label{eq:Exp-J}
\end{align}
Therefore
\[
\widetilde I(\rho) = \inf \Expectation\left[\frac12 \int_0^T|U_t|^2 \, dt \right]\geq \sup_f J(\rho,f)  = I(\rho).
\]

To prove the converse inequality, assume without loss of generality that $I(\rho)<\infty$. Using a reasoning similar to the proof of Lemma~\ref{lemma:charminusone} we find that there exists an $h\in L^2(0,T;L^2_\nabla(\rho_t))$ such that
\begin{equation}
\label{eq:FokkerPlanck-h}
\partial_t \rho_t -A^\tau_{\rho_t} \rho_t  = -\sqrt{2\gamma\theta} \div_p \rho_t h_t \qquad\text{in the sense of distributions.}
\end{equation}
Here the space $L^2(0,T;L^2_\nabla(\rho_t))$ is the Hilbert space obtained by closing $C_c^\infty(\R\times\R^{2d})$ with respect to the (semi-)norm
\begin{equation}
\label{norm:L2L2}
\|f\|_{\rho,T}^2 := \int_0^T \int_{\R^{2d}} |f(x,t)|^2 \, \rho_t(x)\, dt.
\end{equation}
We now construct a specific solution of~\eqref{pb:theo:LDP}. Let $(\widetilde X,W)$ be a solution of~\eqref{pb:theo:LDP:1} with $U=0$ and $\law \widetilde X_0 = \rho^0$; let $P$ be the law of $(\widetilde X,W)$ on $C([0,T];\R^{2d})\times C([0,T];\R^d)$.  Since $\|h\|_{\rho,T} < \infty$,
the process
\[
N_t := \sigma \int_0^t h_s(\widetilde X_s)\, dW_s
\]
is a $P$-square integrable continuous martingale with quadratic variation $\langle N\rangle_t = 2\gamma\theta t$.

Define $P_h$ as the modified law on $C([0,T];\R^{2d})\times C([0,T];\R^d)$ given by
\[
P_h := \exp\bigl[N_T  - \tfrac12 \langle N\rangle_T\bigr] \; P.
\]
By the Girsanov theorem (e.g.~\cite[Section~IV.4]{IkedaWatanabe81}) $P_h$ is the law of the unique solution $(X,W)$ of equation~\eqref{pb:theo:LDP:1} with $U_t = h_t(X_t)$, and since equation~\eqref{eq:FokkerPlanck-h} is the corresponding Fokker-Planck equation, it follows that the law of $X_t$ is equal to $\rho_t$. Therefore $(X,h\circ X,W)$ is a solution of~\eqref{pb:theo:LDP}.
Using~\eqref{eq:Exp-J} for this solution, we find for all $f$ that
\begin{align*}
\widetilde I(\rho) &\leq  J\Bigl(\rho,\frac f{2\gamma\theta}\Bigr)
+ \frac1{4\gamma\theta}\Expectation\left[ \int_0^T|h_t( X_t) - \nabla_p f_t( X_t)|^2 \, dt \right]\\
&\leq  I(\rho)
+ \frac1{4\gamma\theta}\Expectation\left[ \int_0^T|h_t( X_t) - \nabla_p f_t( X_t)|^2 \, dt \right]\\
&= I(\rho)  + \frac1{4\gamma\theta}\int_0^T \int_{\R^{2d}} |h_t(\xi)-\nabla_pf_t(\xi)|^2 \,\rho_t(d\xi) dt.
\end{align*}
Since $L^2(0,T;L^2_\nabla(\rho_t))$ is the closure of $C_c^\infty$ under the norm~\eqref{norm:L2L2},
\[
\inf_{f\in C_c^\infty(\R\times\R^{2d}) } \int_0^T \int_{\R^{2d}} |h_t(\xi)-\nabla_pf_t(\xi)|^2 \,\rho_t(d\xi) dt = 0.
\]
Hence $\widetilde I(\rho)\leq I(\rho)$ and this concludes the proof of~\eqref{def:I}. The form as in~\eqref{rem:alternativeCharacterization} of $I$ then follows from~\eqref{def:I} and Lemma~\ref{lemma:charminusone}.
\end{proof}

\begin{remark}
The structure of the large-deviation result of Theorem~\ref{theo:LDP} reflects a number of properties of the stochastic particle system~\eqref{eq:SDE}. To start with, the rate function is only finite if $\partial_t \rho - A_{\rho}^T\rho$ only has a perturbation in the $p$-direction, not in the $q$-direction; this reflects the fact in~\eqref{eq:SDE} that the noise is confined to the $P$-equation. In addition, the perturbation can only be in divergence form; this reflects the deterministic conservation of particles. Finally, the flux is of the form $\rho h$ where $h$ is in the closure $L^2_\nabla(\rho)$ of $p$-gradients; this property is also seen in the characterization of absolutely continuous curves in the Wasserstein metric~\cite[Theorem~8.3.2]{AGS08}.
\end{remark}

\begin{remark}
There is a large literature on large-deviation principles for stochastic particle systems; here we just mention a few results. Dawson and G\"artner~\cite{DG87} prove a large-deviations result for systems of interacting particles with non-degenerate diffusion, i.e., for nonsingular mobilities $\sigma$ with range $\R^{2d}$. Cattiaux and L\'eonard~\cite{CL94,CL94b} generalize the method of Dawson and G\"artner to singular mobilities, but for independent particles. In a separate paper~\cite{CattiauxLeonard95}, Cattiaux and L\'eonard also discuss the identification question treated in the proof of Theorem~\ref{theo:LDP} in more generality. Fischer~\cite{Fischer12TR} also proves identification results on related systems.

In the proof above we used the large-deviation result by Budhiraja \emph{et al.}~\cite{BudhirajaDupuisFischer12} above to obtain the large-deviation principle itself and a first characterization of the rate functional. The methods by which we identified $\widetilde I$ with $I$ are standard, but we did not find a theorem that suited our needs, and therefore we gave a separate proof.
\end{remark}

\medskip

For the sequel it will be useful to have a regularity result on the Hamiltonian $\mH$ (see~\eqref{def:VFPMeasureStructure.1}) associated with those curves $\rho$ for which  $I(\rho)$ is finite:
\begin{lemma}
\label{lemma:HinW12}
If $I(\rho)<\infty$ and $\mH(\rho_0)< \infty$, then the function $t\mapsto \mH(\rho_t)$ is an element of $W^{1,2}(0,T)$, and $\int_{\R^{2d}} p^2\, d\rho_t\in L^\infty(0,T)$.
\end{lemma}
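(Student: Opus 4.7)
The plan is to exploit the equation $\partial_t\rho_t = A^\tau_{\rho_t}\rho_t - \div_p(\rho_t h_t)$ with $h\in L^2(0,T;L^2_\nabla(\rho_t))$ satisfying $\int_0^T\|h_t\|_{\rho_t}^2\,dt = 4\gamma\theta I(\rho)$, which is provided by representation~\eqref{rem:alternativeCharacterization} since $I(\rho)<\infty$. Formally, one tests this distributional equation against the variational derivative $\grad\mH(\rho_t)(q,p) = p^2/(2m) + V(q) + (\psi*\rho_t)(q)$ to compute $\frac{d}{dt}\mH(\rho_t)$, then uses a Gronwall-type argument to extract first the $L^\infty$ bound on $\int|p|^2\,d\rho_t$ and subsequently the $W^{1,2}$ regularity of $t\mapsto\mH(\rho_t)$.

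The key computation uses $\nabla_p\grad\mH = p/m$, $\nabla_q\grad\mH = \nabla V + \nabla\psi*\rho_t$, $\Delta_p\grad\mH = d/m$, together with the symmetry of $\psi$ to handle the quadratic self-interaction term (so that $\frac{d}{dt}\mH = \int\grad\mH\,\partial_t\rho_t$). After cancellation of the conservative (Liouville) contribution one obtains
\[
\frac{d}{dt}\mH(\rho_t) = -\frac{\gamma}{m^2}\int|p|^2\,d\rho_t + \frac{\gamma\theta d}{m} + \frac1m\int p\cdot h_t\,d\rho_t.
\]
Cauchy--Schwarz and Young's inequality give
\[
\Bigl|\tfrac1m\int p\cdot h_t\,d\rho_t\Bigr|\leq \frac{\gamma}{2m^2}\int|p|^2\,d\rho_t + \frac1{2\gamma}\|h_t\|_{\rho_t}^2,
\]
so integration in time and discarding the non-positive contribution yield $\mH(\rho_t)\leq \mH(\rho_0) + \gamma\theta dT/m + 2\theta I(\rho)$. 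Since $V,\psi\geq 0$, this bounds $\int|p|^2\,d\rho_t\leq 2m\mH(\rho_t)$ uniformly on $[0,T]$, proving the $L^\infty$ claim. Returning to the identity for $\frac{d}{dt}\mH(\rho_t)$, each term is now an $L^2(0,T)$ function of $t$: the first two are bounded, and the last is controlled by $\|p\|_{\rho_t}\|h_t\|_{\rho_t}/m$, a product of an $L^\infty$ and an $L^2$ function. This, together with the $L^\infty$ bound on $\mH(\rho_t)$, delivers $\mH\circ\rho\in W^{1,2}(0,T)$.

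The main obstacle is that $\grad\mH(\rho_t)$ is not compactly supported (it grows like $|p|^2 + V(q)$), so it is not an admissible test function in $\cD(\R^{2d})$, and the identity above is only formal. The rigorous implementation introduces a smooth cutoff $\chi_R\in C_c^\infty(\R^{2d})$ with $\chi_R\uparrow 1$ and uniformly controlled derivatives, tests the distributional equation against $\chi_R\grad\mH(\rho_t)$, and runs the Gronwall estimate at the truncated level where all quantities are manifestly finite. The additional commutator terms generated by $\nabla\chi_R$ and $\Delta_p\chi_R$ are handled by their boundedness and the localization of their supports; the Gronwall bound is uniform in $R$, so monotone convergence on the positive kinetic/potential parts and dominated convergence on the bounded $\psi*\rho_t$ contribution and the $h_t$ terms let us pass $R\to\infty$. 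Only after the $L^\infty$ bound on $\int|p|^2\,d\rho_t$ is established uniformly in $R$ can the cutoff be removed in the identity for $\frac{d}{dt}\mH(\rho_t)$ and absolute continuity of $t\mapsto\mH(\rho_t)$ (hence $W^{1,2}$ membership) be concluded.
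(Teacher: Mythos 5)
Your proposal is correct and follows essentially the same route as the paper: test the representation $\partial_t\rho_t = A^\tau_{\rho_t}\rho_t - \div_p(\rho_t h_t)$ against $\grad\mH$ to get the identity for $\partial_t\mH(\rho_t)$, absorb the cross term $\int \tfrac pm\cdot h_t\,d\rho_t$ into the negative kinetic term via Young, integrate in time to obtain the uniform bound on $\mH(\rho_t)$ and hence on $\int p^2\,d\rho_t$, then read off that the right-hand side of the identity lies in $L^2(0,T)$. The paper makes the same observation about rigorizing via compactly supported approximants of $p^2/2m+V$ but with less detail than your cutoff discussion; the only cosmetic difference is the sign the paper writes on the $h$-term (it works with $-\int \tfrac pm\cdot h_t\,d\rho_t$), which disappears once absolute values and Young's inequality are applied.
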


\begin{proof}
By~\eqref{cond:Vpsi}, $\mH(\rho)$ bounds the integral $\int p^2/m^2 \, d\rho$ from above. Using the characterization of Remark~\ref{rem:alternativeCharacterization}, we formally calculate that
\begin{align}
\label{est:mH}
\partial_t \mH(\rho_t) &= \frac{\gamma \theta d}m - \gamma \int \frac{p^2}{m^2} \,d\rho_t
- \int \frac pm \cdot  h_t\, d\rho_t\\
&\leq \frac{\gamma \theta d}m - \gamma \int \frac{p^2}{m^2} \,d\rho_t
   + \gamma \int \frac{p^2}{m^2} \,d\rho_t+ \frac1{4\gamma} \int |h_t|^2 \, d\rho_t\notag\\
&= \frac{\gamma \theta d}m
  + \frac1{4\gamma} \int |h_t|^2 \, d\rho_t.
  \notag
\end{align}
This calculation can be made rigorous in its time-integrated form by approximating $p^2/m^2 + V(q)$ by a sequence of smooth functions $f_n\in C^\infty_c(\R^{2d})$, and using $f_n$ in the distributional form of the equation $\partial_t\rho_t=A_{\rho_t}^T\rho_t -\div_p(\rho_t h_t)$. Continuing with the proof, it follows that
\[
\sup_{t\in[0,T]} \mH(\rho_t) \leq \mH(\rho_0) + \frac{\gamma\theta d}m T + \frac1{4\gamma} \int_0^T \int |h_t|^2 \, d\rho_t = \mH(\rho_0) + \frac{\gamma \theta d}m T + \theta \,I(\rho) < \infty,
\]
and consequently $\int p^2\, d\rho_t$ is also uniformly bounded. We conclude by remarking that the right-hand side of~\eqref{est:mH}, as a function of time $t$, is an element of $L^2(0,T)$.
\end{proof}

\begin{remark}
\label{rem:lemma:HinW12:solution}
Note that a solution $\rho$ of~\eqref{eq:VFP} satisfies $I(\rho) = 0$, and therefore Lemma~\ref{lemma:HinW12} also applies to solutions of~\eqref{eq:VFP}.
\end{remark}

\section{Main results 2: The VFP equation and the large deviations in GENERIC form}

\label{sec:GENERICVFP}

In this section we reformulate both the VFP equation and the large-deviation rate functional of the previous section in terms of the GENERIC structure. It will become apparent that the large-deviation behaviour respects the GENERIC structure, in the sense that the rate function for this system can be formulated in an abstract form, using only the GENERIC building blocks. This will suggest in Section~\ref{sec:variational} a variational formulation for a very general GENERIC system.

\subsection{Making the VFP equation conserve energy}
As it stands, the  VFP equation~\eqref{eq:VFP} does not satisfy the conditions of GENERIC, since there is no conserved functional $\aE$. The reason for this is physical: the SDE~\eqref{eq:SDE} models a system of particles in interaction with a heat bath, and this interaction causes fluctuations of the natural energy (the Hamiltonian) of the particle system,
\begin{equation}
\label{def:Hn}
H_n(Q_1,\dots,Q_n,P_1,\dots,P_n) := \frac1n\sum_{i=1}^n \Big[\frac{P_i^2}{2m} + V(Q_i)\Big]
  + \frac1{2n^2}\sum_{\substack{i,j=1}}^n \psi(Q_i-Q_j).
\end{equation}
Indeed, combining~\eqref{eq:SDE} with It\^o's lemma the derivative of the expression above is
\[
-\frac1n\sum_{i=1}^n\left[\frac{\gamma}{m^2}P_i^2\, dt-\frac{\gamma \theta d}{m}\,dt+\frac{\sqrt{2\gamma \theta}}{m}P_i\,dW_i\right],
\]
which has no reason to vanish.
There is a simple remedy for this: we add a single scalar unknown~$e_n$ and define its evolution by the negative of the above, leading to the extended particle system
\begin{subequations}
\label{eq:SDE-extended}
\begin{align}
dQ_i&=\frac{P_i}{m}\,dt,
\\dP_i&=-\nabla V(Q_i)\,dt - \sum_{\substack{j=1}}^n \nabla \psi(Q_i-Q_j)-\frac{\gamma}{m}P_i\,dt+\sqrt{2\gamma \theta}\,dW_i,\\
\label{eq:SDE-e}
de_n &= \frac1n \sum_{i=1}^n\left[\frac{\gamma}{m^2}P_i^2\,dt -\frac{\gamma \theta d}{m}\,dt+\frac{\sqrt{2\gamma \theta }}{m}P_i\,dW_i\right],
\end{align}
\end{subequations}
with which  $H_n + e_n$ becomes deterministically constant. Note that $e_n$ can be interpreted as the energy of the heat bath; the flow of energy between the particle system and the heat bath is described by the flow of energy between $H_n$ and $e_n$.

Exactly the same arguments apply to the VFP equation~\eqref{eq:VFP}. At this level the analogue of the Hamiltonian $H_n$ is the functional $\mH$ defined in~\eqref{def:VFPMeasureStructure.1}, and indeed $\mH$ is not constant along a solution, as can be directly verified.
We mirror the arguments above and add a new variable $e$, \emph{depending only on time}, so that the solution space becomes $(\rho,e)\in \P(\R^{2d})\times \R$.
The full system is now defined by the VFP equation~\eqref{eq:VFP} plus the equation $de/dt = -(d/dt) \mH(\rho)$, that guarantees that $\mH(\rho)+e$ is conserved. When writing this equation in full, it becomes
\begin{subequations}
\label{pb:VFPExtended}
\begin{align}
\label{pb:VFPExtended:rho}
\partial_t \rho &= - \div_q\Bigl(\rho \frac pm\Bigr) + \div_p  \rho \Bigl(\nabla_q V + \nabla_q \psi * \rho + \gamma\frac pm \Bigr)+\gamma \theta \Delta_p \rho,\\[\jot]
\frac d{dt} e &= \gamma \int_{\R^{2d}} \frac{p^2}{m^2}\,\rho(dqdp) - \frac{\gamma \theta d}m.
\label{pb:VFPExtended:e}
\end{align}
\end{subequations}

We stress that this system is coupled only in one direction: the second equation is slaved to the first one. Note that equation~\eqref{pb:VFPExtended:e} is well-defined: if $\mH(\rho_0)<\infty$, then by Lemma~\ref{lemma:HinW12} and Remark~\ref{rem:lemma:HinW12:solution} $\mH(\rho_t)$ is bounded for all $t$;  therefore $\int p^2 d\rho_t$ is finite for all $t$.

By this simple mechanism a non-conserving system can be made conserving. Although mathematically this is no more than a trick, for this system it has physical meaning, as we argued above: the additional variable keeps track of the movement of energy between the particle system and the heat bath. We next show that the remaining conditions of GENERIC can also be verified.

\subsection{The VFP equation as a GENERIC system}
\label{subsec:VFPasGENERIC}

With the extension of the previous section, the VFP equation is formally a GENERIC system with the following building blocks:
\begin{equation}
\label{def:KramersInGENERIC}
\begin{aligned}
\aZ &= \P_2(\R^{2d}) \times \R,\quad & \aE(\rho,e) &= \mH(\rho) + e,
&\qquad \aL &= \aL(\rho,e) = \begin{pmatrix}\aL_{\rho\rho} & 0\\0 & 0\end{pmatrix},\\
\az& = (\rho,e),& \aS(\rho,e) &=
 \mS(\rho) + e,
&\qquad \aM &= \aM(\rho,e) = \gamma\begin{pmatrix}\aM_{\rho\rho}  & \aM_{\rho e}\\\aM_{e\rho} & \aM_{ee}\end{pmatrix},
\end{aligned}
\end{equation}
where the operators defining $\aL$ and $\aM$ are given, upon applying them to a vector $(\xi,r)$ at $(\rho,e)$, by
\begin{alignat*}3
\aL_{\rho\rho} \xi &= \div \rho \J \nabla \xi,&\qquad
\aM_{\rho\rho}\xi &= - \div_p \rho\nabla_p \xi, &\qquad \aM_{\rho e} r &= r\div_p \Big(\rho\frac pm\Big),\\
&&\aM_{e\rho}\xi  &= -\int_{\R^{2d}} \frac pm \cdot\nabla_p\xi \, \rho(dqdp)
&\qquad \aM_{ee} r &= r\int_{\R^{2d}} \frac{p^2}{m^2} \,\rho(dqdp).
\end{alignat*}
The space $\P_2(\R^{2d})$ is the subset of $\P(\R^{2d})$ with bounded second $p$-moments:
\[
\P_2(\R^{2d}) := \Bigl\{ \rho\in \P(\R^{2d}): \int_{\R^{2d}} p^2 \rho(dpdq) < \infty\Bigr\}.
\]
We equip $\P_2(\R^{2d})$ with the same weak topology as $\P(\R^{2d})$. Finally, the entropy $\mS$ is defined as
\[
\mS(\rho) := -\theta \int_{\R^{2d}} f(x)\log f(x) \, dx \quad \text{ whenever $\rho$ has Lebesgue density $f$.}
\]

With these definitions, equation~\eqref{eq:VFP} can be written as
\begin{equation}
\label{eq:GENERICVFP}
\partial_t\az_t = \aL(\az_t) \grad \aE(\az_t) + \aM(\az_t) \grad \aS(\az_t),
\end{equation}
where the gradient operators are to be interpreted as $L^2$-gradients.
At this stage, however, this equation is formal, since the sense in which this equation holds has not been specified. Rather than going into detail here, we defer this discussion to after the introduction of the variational structure in Section~\ref{sec:variational}.

The operators $\aL$ and $\aM$ can readily be seen to be antisymmetric and symmetric (with respect to the $L^2$-inner-product, since we use $L^2$-gradients as derivatives); for instance, in the case of $\aL$, we have for any vectors $(\xi_1,r_1)$ and $(\xi_2,r_2)$ at $(\rho,e)$ by partial integration that
\[
\dual{(\xi_1,r_1)}{\aL(\rho,e)(\xi_2,r_2)}
= \dual{\xi_1}{\aL_{\rho\rho}(\rho) \xi_2}
= \int_{\R^{2d}} \xi_1\div\rho\J\nabla \xi_2 =  -\int_{\R^{2d}} \nabla \xi_2\cdot \J^T\nabla \xi_1
\,\rho ,
\]
which is antisymmetric since $\J$ is antisymmetric (see~\eqref{def:symplecticMatrixJ}). The verification of the symmetry of $\aM$ is similar; the verification of the Jacobi identity~\eqref{def:Jacobi} is a tedious but elementary calculation, which  hinges on the fact that $\J$ is constant and antisymmetric. Finally, the verification of the degeneracy conditions~\eqref{def:degeneracy} is again straightforward.

\subsection{Large deviations for the VFP equation in GENERIC form}

We now reformulate the large-deviations rate functional of Theorem~\ref{theo:LDP} in terms of the GENERIC building blocks above, and therefore in terms of the extended unknown $\az= (\rho,e)\in \aZ$.  To do this, we also generalize the concepts of absolute continuity and introduce the appropriate norms.

\begin{definition}
The function $[0,T]\ni t \mapsto \az(t) = (\rho(t),e(t)) \in \aZ$ is \emph{absolutely continuous} if $\rho\in AC([0,T];\P_2(\R^{2d}))$ and $e\in AC([0,T];\R)$.
\end{definition}
\noindent
Again, if $\az$ is absolutely continous, then $\partial_t \az$ exists for almost all $t$ as an element of $\cD'(\R^{2d})\times \R$.

\medskip
The `matrix' $\aM$ generates a natural pair of semi-inner-products and seminorms.
\begin{definition}
Fix $\az = (\rho,e)\in \aZ$. The seminorms $\|\cdot\|_{\aM(\az)}$ and $\|\cdot\|_{\aM(\az)^{-1}}$ are defined as follows. For $(\xi,r)\in C_c^\infty(\R^{2d})\times \R$,
\begin{align*}
\|(\xi,r)\|_{\aM(\az)}^2  &:= \gamma \int_{\R^{2d}}\Bigl[
  \xi\aM_{\rho\rho}\xi + \xi \aM_{\rho e}r + r\aM_{e\rho}\xi + r\aM_{ee}r\Bigr]\, dx\\
&= \gamma \int_{\R^{2d}} \Bigl|\nabla_p \xi -r\frac pm\Bigr|^2 \, d\rho
= \gamma \left\|\nabla_p\xi -r\frac pm \right\|_{\rho}^2.
\end{align*}
For $(\T,s)\in \cD'(\R^{2d})\times \R$,
\begin{equation}
\label{def:aMminusone}
\|(\T,s)\|_{\aM(\az)^{-1}}^2 = \sup_{\substack{\xi\in C_c^\infty(\R^{2d})\\r\in\R}}
  2\dual \T\xi + 2sr - \|(\xi,r)\|_{\aM(\az)}^2.
\end{equation}
The inner products $(\cdot,\cdot)_{\aM}$ and $(\cdot,\cdot)_{\aM^{-1}}$ are then defined through the expression
$4(a,b) = \|a+b\|^2 - \|a-b\|^2$.
\end{definition}

As in the case of $L^2_\nabla(\rho)$, the $\aM$-seminorm is degenerate: there exist $\rho$, $\xi$, and $r$ for which it vanishes.
Let $\mathscr H_\aM$ be the set of equivalence classes of elements of $C_c^\infty(\R^{2d})\times \R$ with zero distance in this norm. On $\mathscr H_\aM$, the $\aM$-seminorm is a norm, and we define $H_\aM$ as the completion of $\mathscr H_\aM$ with respect to this norm. Note that $H_\aM$ can be identified with the space $L^2_\nabla(\rho)$, as follows. On one hand, if $(\eta_n, s_n)$ is a Cauchy sequence in $\mathscr H_\aM$, then
\[
\big\|(\eta_n,s_n)-(\eta_{n'},s_{n'})\big\|_{\aM} = \sqrt{\gamma}\Bigl\|\nabla_p (\eta_n-\eta_{n'}) -(s_n-s_{n'})\frac pm\Bigr\|_{\rho}\longrightarrow 0\quad \text{ as }n,n'\to\infty,
\]
so that $\nabla_p\eta_n -s_np/m$ is a Cauchy sequence in $L^2_\nabla(\rho)$ and thus converges to some $h\in L^2_\nabla(\rho)$; vice versa, for each $h\in L^2_\nabla(\rho)$ by definition there exists a sequence $\eta_n\in C_c^\infty$ such that $\nabla_p\eta_n\to h $ in $L^2_\nabla(\rho)$, and therefore $(\eta_n,0)$ is a Cauchy sequence in $\mathscr H_\aM$ corresponding to $h$.

Since the $\aM$-seminorm is degenerate, the $\aM^{-1}$-seminorm is singular. Indeed, Lemma~\ref{lemma:charminusone} implies the following
\begin{lemma}
Assume that $\int_{\R^{2d}} p^2 \, d\rho < \infty$. Then
\label{lemma:CharMminusone}
\[
\|(\T,s)\|_{\aM(\az)^{-1}}^2 =
\begin{cases}
\displaystyle\frac 1\gamma\int_{\R^{2d}} |h|^2 \, d\rho
& \text{if } \T = -\div_p \rho h  \displaystyle
  \text{ with } h \in L^2_\nabla (\rho) \text{ and } s =  - \int_{\R^{2d}}\frac pm \cdot h\, d\rho,\\
+\infty & \text{otherwise}.
\end{cases}
\]
\end{lemma}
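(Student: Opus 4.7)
The plan is to mimic the proof of Lemma~\ref{lemma:charminusone} but working on the extended state space $C_c^\infty(\R^{2d})\times\R$, exploiting the identification of $H_\aM$ with $L^2_\nabla(\rho)$ that was discussed just before the statement. The key observation is that the map $\Phi\colon C_c^\infty(\R^{2d})\times\R \to L^2_\nabla(\rho)$, $\Phi(\xi,r):=\nabla_p\xi - r\tfrac pm$, is well-defined because the hypothesis $\int p^2\,d\rho<\infty$ ensures $p/m\in L^2_\nabla(\rho)$, and moreover $\|(\xi,r)\|_{\aM(\az)}^2 = \gamma\|\Phi(\xi,r)\|_\rho^2$. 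Hence $\Phi$ descends to an isometry of $H_\aM$ onto a subspace of $L^2_\nabla(\rho)$ that contains the dense subset $\{\nabla_p\xi:\xi\in C_c^\infty\}$ and is therefore dense in $L^2_\nabla(\rho)$.

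Assuming first that $\|(\T,s)\|_{\aM(\az)^{-1}}<\infty$, I would replace $(\xi,r)$ by $\lambda(\xi,r)$ in the defining sup~\eqref{def:aMminusone} and optimise over $\lambda\in\R$, exactly as in Lemma~\ref{lemma:charminusone}, to deduce
\[
\bigl|\dual\T\xi + sr\bigr| \leq \|(\T,s)\|_{\aM(\az)^{-1}}\,\|(\xi,r)\|_{\aM(\az)} = \sqrt\gamma\,\|(\T,s)\|_{\aM(\az)^{-1}}\,\|\Phi(\xi,r)\|_\rho.
\]
Thus $L(\xi,r):=\dual\T\xi+sr$ is a bounded linear functional on the image of $\Phi$ with respect to the $L^2_\nabla(\rho)$-norm. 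Using the density of this image, $L$ extends uniquely to a bounded linear functional on $L^2_\nabla(\rho)$, and the Riesz representation theorem produces $h\in L^2_\nabla(\rho)$ such that
\[
\dual\T\xi + sr = \int_{\R^{2d}} h\cdot\Bigl(\nabla_p\xi - r\tfrac pm\Bigr)\, d\rho \qquad\text{for all }(\xi,r)\in C_c^\infty(\R^{2d})\times\R.
\]
Choosing $r=0$ yields $\T=-\div_p(\rho h)$ in $\cD'(\R^{2d})$; choosing $\xi=0$ yields $s=-\int \tfrac pm\cdot h\,d\rho$, confirming the structural conditions.

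For the value of the norm I would substitute this representation back into~\eqref{def:aMminusone}: setting $g:=\Phi(\xi,r)$ and using density of $\Phi(C_c^\infty\times\R)$ in $L^2_\nabla(\rho)$, the sup reduces to
\[
\sup_{g\in L^2_\nabla(\rho)} \Bigl(2\int h\cdot g\,d\rho - \gamma\int|g|^2\,d\rho\Bigr),
\]
which is maximised at $g=h/\gamma$ and equals $\frac1\gamma\int|h|^2\,d\rho$. Conversely, when $(\T,s)$ is of the stated form with $h\in L^2_\nabla(\rho)$, the same calculation performed directly shows $\|(\T,s)\|_{\aM(\az)^{-1}}^2=\frac1\gamma\int|h|^2\,d\rho$; and the dichotomy in the statement follows by contraposition from the first part, since any $(\T,s)$ not of this form must have $\|(\T,s)\|_{\aM(\az)^{-1}}=\infty$.

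The main obstacle I anticipate is not the Riesz step itself but the bookkeeping around the degeneracy of the $\aM$-seminorm and the joint $(\xi,r)$ structure: one must be careful that the extension of $L$ from $\Phi(C_c^\infty\times\R)$ to $L^2_\nabla(\rho)$ is both well-defined (i.e.\ that $L$ vanishes on vectors in the $\aM$-kernel, which follows from the bound above) and produces a representative $h$ that simultaneously encodes both components $\T$ and $s$ consistently. The hypothesis $\int p^2\,d\rho<\infty$ is essential here, as it is what makes the map $(\xi,r)\mapsto\nabla_p\xi - rp/m$ take values in $L^2_\nabla(\rho)$ and allows the compatibility relation between $s$ and $h$ to make sense.
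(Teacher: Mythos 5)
Your proposal is correct and follows essentially the same route as the paper: reduce to a bounded linear functional on $L^2_\nabla(\rho)$ via the identification of $H_\aM$ with $L^2_\nabla(\rho)$, apply Riesz to obtain $h$, and read off the two structural conditions by specialising to $r=0$ and $\xi=0$. You simply spell out the steps that the paper compresses into ``From this identity the claim follows'' (the explicit evaluation of the supremum and the well-definedness of the descended functional), which is a faithful and somewhat more careful rendering of the same argument.
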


\begin{proof}
As in the case of Lemma~\ref{lemma:charminusone}, $\|(\T,s)\|_{\aM(z)^{-1}}<\infty$ implies that $(\T,s)$ is a  linear functional on $C_c^\infty\times \R$, and by the assumption $\int p^2\,d\rho<\infty$ it is bounded with respect to the $\aM$-seminorm. Because of the identification with $L^2_\nabla(\rho)$ we can consider it as a bounded linear functional on $L^2_\nabla(\rho)$. By the Riesz representation theorem there exists an element $h\in L^2_\nabla(\rho)$ such that for all $\xi$ and $r$
\[
\dual{\T}\xi + rs =   \int_{\R^{2d}} h \Big(\nabla_p \xi - r\frac pm\Big)\, d\rho
=  \int_{\R^{2d}} h  \cdot \nabla_p \xi\, d\rho - \, r\int_{\R^{2d}} h \cdot \frac pm \, d\rho.
\]
From this identity the claim follows.
\end{proof}
\noindent

The rate function of Theorem~\ref{theo:LDP} now has a reformulation in terms of the objects that we have just defined.
\begin{lemma}
\label{theo:LDPGENERIC}
The rate function $I$ of Theorem~\ref{theo:LDP} can be written in terms of $\az$ as
\begin{equation}
\label{def:J}
J(\az) = \begin{cases}
\displaystyle
\int_0^T \frac1{4\theta} \bigl\|\partial_t\az_t - \aL(\az_t) \grad \aE(\az_t) - \aM(\az_t) \grad \aS(\az_t)\bigr\|_{\aM(\az_t)^{-1}}^2\, dt, \kern-2cm\\
 &\text{if }\az =(\rho,e) \in AC([0,T];\aZ) \text{ and } \rho_{t=0} = \rho^0,\\[2\jot]
+\infty  &\text{otherwise},
\end{cases}
\end{equation}
in the sense that
\[
J\big((\rho,e)\big)  = \begin{cases}
I(\rho) &\qquad\text{provided $t\mapsto \mH(\rho_t) + e_t$ is constant }\\
+\infty &\qquad \text{otherwise}.
\end{cases}
\]
\end{lemma}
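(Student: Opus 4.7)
The plan is to reduce the claim to the explicit characterization of the $\aM^{-1}$-seminorm provided in Lemma~\ref{lemma:CharMminusone}. I will proceed in two steps: first verify that the GENERIC right-hand side $\aL\grad\aE + \aM\grad\aS$ coincides with the right-hand side of the extended system \eqref{pb:VFPExtended}, so that the ``residual'' $\partial_t\az_t - \aL(\az_t)\grad\aE(\az_t) - \aM(\az_t)\grad\aS(\az_t)$ is the natural pair $(\T_t,s_t):=\bigl(\partial_t\rho_t - A^\tau_{\rho_t}\rho_t,\ \partial_t e_t - \gamma\int p^2/m^2\,d\rho_t + \gamma\theta d/m\bigr)$; second, apply Lemma~\ref{lemma:CharMminusone} pointwise in $t$, reinterpreting its two finiteness conditions as $I(\rho)<\infty$ and conservation of $\mH(\rho)+e$, respectively.

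For the first step, I compute $\grad\aE=(\grad\mH,1)$ and $\grad\aS=(\grad\mS,1)$. Applying $\aL(\az)$ gives $(\aL_{\rho\rho}\grad\mH,\,0) = (\div\rho\J\nabla\grad\mH,\,0)$, and with $\nabla\grad\mH = (\nabla_q V + \nabla_q\psi*\rho,\ p/m)$ this yields precisely the reversible part of \eqref{pb:VFPExtended:rho}. Applying $\gamma\aM(\az)$ and using $\nabla_p\grad\mS = -\theta\,\nabla_p\log f$, the $\rho$-component becomes the dissipative part $\gamma\theta\Delta_p\rho + \gamma\div_p(\rho\,p/m)$, completing the VFP equation; the $e$-component produces $\gamma\int p^2/m^2\,d\rho - \gamma\theta d/m$, matching \eqref{pb:VFPExtended:e}.

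For the second step, Lemma~\ref{lemma:CharMminusone} says that $\|(\T_t,s_t)\|_{\aM(\az_t)^{-1}}^2$ is finite iff there exists $h_t\in L^2_\nabla(\rho_t)$ with $\T_t = -\div_p(\rho_t h_t)$ and $s_t = -\int(p/m)\cdot h_t\,d\rho_t$, in which case it equals $\gamma^{-1}\int|h_t|^2\,d\rho_t$. The first condition is exactly the form $\partial_t\rho_t = A^\tau_{\rho_t}\rho_t - \div_p(\rho_t h_t)$ that characterizes $I(\rho)<\infty$ by \eqref{rem:alternativeCharacterization}. For the second, I test the $\rho$-equation against $\grad\mH$: integration by parts together with the identities $A_\rho\grad\mH = -\gamma\,p^2/m^2 + \gamma\theta d/m$ and $\nabla_p\grad\mH = p/m$ (which also underlie \eqref{est:mH}) gives $\tfrac{d}{dt}\mH(\rho_t) = -\gamma\int p^2/m^2\,d\rho_t + \gamma\theta d/m + \int(p/m)\cdot h_t\,d\rho_t$. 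Hence $s_t + \int(p/m)\cdot h_t\,d\rho_t = \partial_t e_t + \tfrac{d}{dt}\mH(\rho_t)$, so the second finiteness condition is exactly $\tfrac{d}{dt}(\mH(\rho_t)+e_t)=0$. Time-integration then delivers $J(\az) = (4\gamma\theta)^{-1}\int_0^T\int|h_t|^2\,d\rho_t\,dt = I(\rho)$, with both sides equal to $+\infty$ as soon as either finiteness condition fails (no admissible $h$, or $\mH(\rho)+e$ non-constant).

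The main technical obstacle is making the chain rule for $t\mapsto\mH(\rho_t)$ rigorous, since $\grad\mH$ contains the unbounded function $p^2/(2m)+V$ while the equation for $\rho$ holds only distributionally. This is exactly the difficulty handled in Lemma~\ref{lemma:HinW12}: one approximates $p^2/(2m)+V(q)$ by a sequence $f_n\in C^\infty_c(\R^{2d})$, tests the distributional equation $\partial_t\rho_t - A^\tau_{\rho_t}\rho_t = -\div_p(\rho_t h_t)$ against $f_n$, and passes to the limit using the $L^\infty(0,T)$-bound on $\int p^2\,d\rho_t$ furnished by Lemma~\ref{lemma:HinW12}; the nonlinear contribution $\tfrac12\int(\psi*\rho_t)\,\rho_t$ poses no extra difficulty because $\psi\in C^2$ is bounded with bounded first derivative.
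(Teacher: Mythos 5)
Your proof is correct and follows essentially the same route as the paper's: both reduce the claim to Lemma~\ref{lemma:CharMminusone} applied to the residual $(\T_t,s_t)=\bigl(\partial_t\rho_t-A^\tau_{\rho_t}\rho_t,\ \partial_t e_t-\gamma\int p^2/m^2\,d\rho_t+\gamma\theta d/m\bigr)$, and both use the chain rule for $t\mapsto\mH(\rho_t)$ to identify the scalar constraint $s_t=-\int(p/m)\cdot h_t\,d\rho_t$ with conservation of $\mH(\rho_t)+e_t$. One remark worth recording: your computation $\tfrac{d}{dt}\mH(\rho_t)=-\gamma\int p^2/m^2\,d\rho_t+\gamma\theta d/m+\int(p/m)\cdot h_t\,d\rho_t$ carries the opposite sign on the last term from both~\eqref{est:mH} and the formula for $\partial_t e_t$ in the paper's proof; your sign is the correct one (testing $-\div_p(\rho_t h_t)$ against $\grad\mH$ and integrating by parts produces $+\int(p/m)\cdot h_t\,d\rho_t$), so the paper's minus sign there is a typo, and it is precisely your sign that makes $s_t=-\int(p/m)\cdot h_t\,d\rho_t$ consistent with Lemma~\ref{lemma:CharMminusone}.
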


\begin{proof}
First assume that $I(\rho)<\infty$. By~\eqref{def:I} and Lemma~\ref{lemma:charminusone} we have
\[
\partial_t \rho_t - A^\tau_{\rho_t}\rho_t = -\div \rho_t h_t,
\]
where $h\in L^2(0,T;L^2_\nabla(\rho_t))$. Define $e$ by
\[
e_0 := 0 \qquad \text{and}\qquad \partial_t e_t = \gamma \int_{\R^{2d}} \frac{p^2}{m^2}\,\rho_t(dqdp) - \frac{\gamma \theta d}m +\int_{\R^{2d}}\frac pm \, h_t \, \rho_t(dqdp).
\]
By Lemma~\ref{lemma:HinW12} the function $t\mapsto \int p^2\, d\rho_t$ is in $L^\infty(0,T)$, and since $h\in L^2(0,T;L^2_\nabla(\rho_t))$ the last term is in $L^1(0,T)$; therefore $e$ is well-defined, and an element of $AC([0,T];\R)$. By construction the function $t\mapsto \mH(\rho_t) + e_t$ is constant. Upon setting $\az := (\rho,e)$, an explicit calculation shows that $I(\rho)$ and $J(\az)$ are both equal to $(4\gamma\theta)^{-1}\int_0^T \int_{\R^{2d}} |h_t|^2 \,d\rho_tdt$.

A similar argument starts by assuming $J(\az)< \infty$ for $\az=(\rho,e)$ and showing that $I(\rho)$ and $J(\az)$ are again equal.
\end{proof}

\begin{remark}
Note how the condition of constant energy $\mH + e$ is contained in~\eqref{def:J} through the defintion of the seminorm $\|\cdot\|_{\aM^{-1}}$.
\end{remark}

\section{Main results 3: A variational formulation for GENERIC systems}
\label{sec:variational}

The functional $J$ in~\eqref{def:J} has the interesting property that it only depends on the GENERIC building blocks, and therefore makes sense, at least formally, for an arbitrary GENERIC system. We now explore the consequences of this observation for general GENERIC systems. The discussion in this section is therefore necessarily formal.

First, we note that the functional $J$ can be written in a different way by using one of the degeneracy conditions~\eqref{def:degeneracy}. As above, we associate a formal inner product with $\aM$ and $\aM^{-1}$ by
\[
(a,b)_\aM\colonequals {a}\cdot{\aM \,b}
\qquad\text{and}\qquad
(a,b)_{\aM^{-1}}\colonequals{a}\cdot {\aM^{-1} b}.
\]
(See Remark~\ref{rem:differentiability} for a discussion of the dot in these expressions).
Then the antisymmetry of $\aL$ and the first degeneracy condition in~\eqref{def:degeneracy} imply that
\[
\bigl(\aL \grad \aE,\aM\grad \aS\bigr)_{\aM^{-1}}
= {\aL \grad \aE}\cdot {\grad \aS}
= -{\grad \aE}\cdot {\aL \grad \aS} = 0.
\]
Therefore
\begin{align*}
\|\partial_t\az- \aL \grad \aE - \aM \grad \aS\bigr\|_{\aM^{-1}}^2
  &= \|\partial_t\az- \aL \grad \aE \bigr\|_{\aM^{-1}}^2+ \| \aM \grad \aS\bigr\|_{\aM^{-1}}^2
   + 2\bigl(\partial_t \az ,\aM \grad\aS\bigr)_{\aM^{-1}}\\
  &= \|\partial_t\az- \aL \grad \aE \bigr\|_{\aM^{-1}}^2+ \|  \grad \aS\bigr\|_{\aM}^2
   + 2 \,{\partial_t \az}\cdot {\grad\aS},
\end{align*}
so that
\begin{equation}
\label{def:J2}
2\theta J(\az) = \aS(\az(T)) - \aS(\az(0)) + \frac12 \int_0^T \Bigl[ \|\partial_t\az- \aL \grad \aE \bigr\|_{\aM^{-1}}^2+ \|  \grad \aS\bigr\|_{\aM}^2 \Bigr] \, dt.
\end{equation}

This discussion suggests a general variational formulation for any GENERIC system, as follows:
\begin{quote}
\textbf{Variational formulation of a GENERIC system: } Given a GENERIC system $\{\aZ,\aE,\aS,\aL,\aM\}$, define $J$ as in~\eqref{def:J2}. A function $\az\colon[0,T]\to \aZ$ is a solution of the GENERIC equation~\eqref{eq:GENERICeqn1} iff $J(\az)=0$.
\end{quote}
In full generality, this characterization is formal; no details about the functional setting are stated. In the example of the VFP equation, however, this formulation is exact, as described by Lemma~\ref{theo:LDPGENERIC}.

Indeed, let us now come back to the question in which sense the VFP equation satisfies the GENERIC equation~\eqref{eq:GENERICVFP}. The discussion above suggests that this variational formulation could be a natural solution concept. Indeed, for any $\az = (\rho,e)\in AC([0,T];\aZ)$ with finite $\aS(\az(0))$ each of the terms in~\eqref{def:J2} makes sense as an element of $(-\infty,\infty]$:
\begin{itemize}
\item $\aS(\az(T)) \in (-\infty,\infty]$ by definition;
\item The assumption that $\az\in AC([0,T];\aZ)$ implies that for almost all $t$, $\partial_t \rho $ is a distribution on $\R^{2d}$ and $\partial_t e$ exists in $\R$;
\item Under reasonable assumptions on $V$ and $\psi$, $\aL\grad \aE = -\div_q \rho p/m + \div_p \rho \big[\nabla_q V + \nabla_q \psi*\rho\big]$ is well-defined in the sense of distributions;
\item Therefore the seminorm $\|\partial_t \az - \aL\grad \aE \|_{\aM^{-1}}^2$ is well-defined in $[0,\infty]$;
\item The seminorm $\|\cdot\|_{\aM}^2$ can be assumed well-defined in $[0,\infty]$ for any argument, by extending it by $+\infty$ outside of $H_\aM$.
\end{itemize}

For the VFP equation there are  several other solution concepts that are natural for different reasons and have various advantages; examples are distributional solutions and solutions in the sense of semigroups (since the first and last terms on the right-hand side of~\eqref{eq:VFP}
form a hypoelliptic operator with a smooth and strictly positive fundamental solution). The relevance of this discussion therefore lies not so much in the specific case of the VFP equation, but more in the potential application to general GENERIC systems.

\begin{remark}
Gradient flows are GENERIC systems with $\aE=0$. For this class of systems, this variational formulation is well known and has been put to good use. For instance, Sandier and Serfarty~\cite{SandierSerfaty04} (see also e.g.~\cite{Serfaty09TR,Stefanelli08,Le08,AMPSV12})  showed how the variational form can be used to pass to limits in parameters in the equation. We expect something  similar might be possible for these GENERIC variational formulations, and will return to this in a future publication.
\end{remark}

\section{Synthesis}

Let us recapitulate what we have just seen.
\begin{itemize}
\item The VFP equation has a variational formulation of the type `$J(\az)\geq0$, and $J(\az) = 0 $ iff $\az$ is a solution';
\item This variational formulation, the functional $J$, is identical to the large-deviation rate functional for the stochastic particle system~\eqref{eq:SDE} for the case of fixed energy;
\item The equation and the variational formulation can both be written in terms of  only the GENERIC building blocks;
\item This suggests a variational formulation for an arbitrary GENERIC system.
\end{itemize}

In the remainder of this paper we discuss a number of consequences. In Section~\ref{sec:properties} we use the connection between the VFP equation, large deviations, and the GENERIC structure to shed some light on the properties of GENERIC as formulated in Section~\ref{subsec:GENERIC}. Section~\ref{sec:generalization} is devoted to the generalization mentioned in Section~\ref{subsec:VFP}.

\section{Interpretation of the GENERIC properties}
\label{sec:properties}

The GENERIC structure of the VFP equation,  introduced in Section~\ref{subsec:VFPasGENERIC}, does raise some questions.  Why are these bulding blocks the `right' ones, from a philosophical, or modelling point of view? Is it clear why $\aE$ and $\aS$ should be what they are defined to be in~\eqref{def:KramersInGENERIC}? Is it clear why $\aL$ and $\aM$ are what they are? Why they do indeed satisfy the various conditions described above?

In addition, the origin of the GENERIC properties themselves, as described in Section~\ref{subsec:GENERIC}, is somewhat obscure. Why should `every' thermodynamic system satisfy these properties?
We now show how the connection with large deviations of the underlying particle system gives us some answers to these questions.

%
%

\medskip

%

\emph{The reversible operator\/ $\aL$ and the Hamiltonian $\mH$.} First consider the simpler case when $\psi=0$. Then the only non-zero component of the operator $\aL$, which is $\aL_{\rho\rho} = -\div\rho J\nabla$, is the Liouville operator for the Hamiltonian flow on $\R^{2d}$ generated by the symplectic matrix $\J$ and the Hamiltonian $H(q,p) = p^2/2m + V(q)$. Indeed, $x(t) = (q(t),p(t))$  solves the Hamiltonian equation
\begin{align*}
\frac d{dt} {x}=-\J\nabla H(x)
\end{align*}
      if and only if $\rho(t):=\delta_{{x}(t)}$ solves
\[
\partial_t \rho -\div (\rho\J\nabla H)=0.
\]
Therefore $\aL$ is the natural embedding of the symplectic geometry of $\J$ in $\R^{2d}$ into the space of measures $\P(\R^{2d})$; and when $\psi=0$,  $\mH(\delta_x) = H(x)$, and therefore $\mH$  similarly is the natural embedding of the $\R^{2d}$-space Hamiltonian $H$ into the space of measures.
The anti-symmetry and Jacobi identity properties of $L$ follow directly from that of the matrix~$\J$.

When $\psi$ is non-zero, a similar interpretation of $\mH$ is possible, since with the notation of~\eqref{def:Hn} we have
\[
\mH\big(\eta_n(x_1,\dots,x_n)\big) = H_n(x_1,\dots,x_n), \qquad\text{where}\qquad
\eta_n(x_1,\dots,x_n) := \frac1n \sum_{i=1}^n \delta_{x_i}.
\]
Similarly, $\aL$ can be interpreted as the embedding into $\P(\R^{2d})$ of the Hamiltonian flow on $\R^{2nd}$ generated by a symplectic matrix $\J_n$ consisting of $n$ copies of $\J$.

\medskip

\emph{The entropy functional\/ $\aS$.}
The functional $\aS$ in~\eqref{def:KramersInGENERIC} is defined as $ e+ \mS(\rho)  = e -\theta\int \rho\log \rho\,dx$. The second term in this sum is the usual entropy of $\rho$, multiplied by temperature $\theta$. Its form arises from the \emph{loss of information} in the mapping $\eta_n$ defined above. We explain it now for the case of finite state $S=\{1,\cdots, r\}$; the general case can be handled using the characterization of the relative entropy as a supremum over finite partitions~\cite[Lemma~1.4.3]{DupuisEllis97}. Let $X_1,\cdots, X_n$ be independent identically distributed $S$-valued random variables with common law $\mu$  on a probability space $(\Omega, \Sigma, \mathbb{P})$. Define the (random) empirical measure
\[
L_n:=\frac{1}{n}\sum_{i=1}^{n}\delta_{X_i}.
\]
There is a loss of information in going from $X_1,\cdots,X_n$ to the empirical measure $L_n$: $L_n(\omega)$ characterizes the observed frequencies of $\{1,\cdots,r\}$ among $X_1(\omega),\cdots, X_n(\omega)$, but does not tell us exactly what values  they take. The \emph{degree of degeneracy}, the number of possible ways that $X_1(\omega),\cdots, X_n(\omega)$ can be such that $L_n(\omega)$ is equal to a given $\rho=(\rho_i)_{i=1}^{n}=\left(\frac{k_1}{n},\cdots, \frac{k_r}{n}\right)$, where $(k_1,\cdots,k_r)\in \mathbb{N}^r$, $\sum_{i=1}^rk_i=n$, is
$
\frac{n\!}{k_1\!\cdots k_r\!}.
$
We have
\[
\Prob(L_n=\rho)=\frac{n\!}{k_1\!\cdots k_r\!}\prod_{i=1}^r\mu_i^{k_i},
\]
where $\mu_i=\mu({i})$ for $i=1,\cdots,r$. Hence
\[
\frac{1}{n}\log \Prob(L_n=\rho)=\frac{1}{n}\left(\log n\!-\sum_{i=1}^rk_i\!+\sum_{i=1}^rk_i\log\mu_i\right).
\]
Using  Stirling's formula in the form
\[
\log m\!=m\log m-m+o(m)\quad \text{as } m\rightarrow\infty,
\]
we find
\begin{align*}
\frac{1}{n}\log \mathbb{P}(L_n=\rho)&\approx\frac{1}{n}\left[ n\log n-n-\sum_{i=1}^r(k_i\log k_i-k_i)+\sum_{i=1}^rk_i\log \mu_i\right]
\\&=\log n-\sum_{i=1}^r\frac{k_i}{n}\log k_i+\sum_{i=1}^r\frac{k_i}{n}\log\mu_i \quad (\text{since}~ \sum_{i=1}^r k_i=n)
\\&=\sum_{i=1}^r\rho_i\left(\log n-\log k_i+\log \mu_i\right)\quad (\text{since}~ \rho_i=\frac{k_i}{n}~\text{and}~ \sum_{i=1}^r \rho_i=1)
\\&=\sum_{i=1}^r\rho_i\left(-\log \rho_i+\log \mu_i\right)
= -\sum_{i=1}^r \rho_i \log \frac{\rho_i}{\mu_i}.
\end{align*}
Retracing the steps in this computation we see that the term $\sum_{i=1}^r\rho_i\log\rho_i$ originates from the degree of degeneracy $
\frac{n\!}{k_1\!\cdots k_r\!}.
$

\medskip

\emph{The degeneracy condition $\aL\grad \aS=0$.} In the case of the VFP equation, this property holds true for any functional which depends locally on $\rho$, i.e., any functional of the form
    \[ F(\rho,e)=e+\int f(\rho)\,dx.
    \]
The functional $\aS$ indeed has this form with $f(\rho)=\rho\log\rho$. Therefore the degeneracy  $\aL \grad\aS=0$ holds exactly because the entropy is a local functional---and this locality is closely connected to the fact that the entropy characterizes the loss of information encountered when taking a limit and representing the system in terms of  (limits of) empirical measures, as described above.

\medskip

\emph{The irreversible operator $\aM$ and its properties.} To understand the operator $\aM$ we use an argument that we learned from Alexander Mielke. We transform the co-ordinates $\az=(\rho,e)$ to $\widetilde{\az}=(\widetilde{\rho},\widetilde{e})$, where
\[
    \widetilde{\rho}:=\rho,\qquad \widetilde{e}:=e+\int \grad \mH \, d\rho.
\]
Then the new variable $\widetilde \az$ again solves a GENERIC equation, with new building blocks $\widetilde \aL$, $\widetilde \aM$, $\widetilde \aE$, and $\widetilde \aS$.
Using the change-of-variable formula~\cite{OG97}, the operator $\widetilde\aM$ is given by
\begin{equation}
\label{eq: changeofvariableM}
\widetilde{\aM}=\frac{\partial (\widetilde{\az})}{\partial (\az)}\aM\left[\frac{\partial (\widetilde{\az})}{\partial (\az)}\right]^T,
\end{equation}
where
\[
\frac{\partial (\widetilde{\az})}{\partial (\az)}=
\begin{pmatrix}
\frac{\partial \widetilde{\rho}}{\partial \rho}&\frac{\partial \widetilde{\rho}}{\partial e}\\
\frac{\partial \widetilde{e}}{\partial \rho}&\frac{\partial \widetilde{e}}{\partial e}
\end{pmatrix}
=\begin{pmatrix}
\id& 0\\
\int\square\grad \mH & \id
\end{pmatrix}
\]
is the transformation matrix. This formula should be read as operator composition; we write $\id$ for the identity operator, both for functions on $\R^{2d}$ and for elements of $\R$, and we use the notation
\[
\int\square\grad \mH
\qquad\text{for the operator} \qquad
\xi \mapsto \int\xi \grad \mH.
\]
 Hence
\begin{align*}
\widetilde{\aM}(\widetilde{\az})
&=\begin{pmatrix}
\id& 0\\
\int\square \grad \mH & \id
\end{pmatrix}
\begin{pmatrix}
-\div_p(\rho\nabla_p\square) & \square \div_p(\nabla_p\grad \mH) \\
- \int\nabla_p\grad \mH\cdot \nabla_p \square \,d\rho & \square \int|\nabla_p\grad \mH|^2\,d\rho
\end{pmatrix}
\begin{pmatrix}
\id& \square\grad \mH\\
0& \id
\end{pmatrix}
\\&=\begin{pmatrix}
\id& 0\\
\int\square\grad \mH & \id
\end{pmatrix}
\begin{pmatrix}
 -\div_p(\rho\nabla_p\square) & 0 \\
- \int\nabla_p\grad \mH\cdot \nabla_p \square \,d\rho & 0
\end{pmatrix}
\\&=
\begin{pmatrix}
-\div_p(\rho\nabla_p\square)  & 0 \\
0 & 0
\end{pmatrix}.
\end{align*}
%

These remarks now enable us to comment on the form of $\aM$.
First, the transformation to a different set of variables has the effect of `cleaning up' the operator $\aM$: in the new variables $\widetilde \az$, the operator only acts on the $\rho$ variable. Also, The operator $\widetilde{\aM}$ is clearly symmetric and positive semi-definite. The same properties for $\aM$ then follow as a consequence of~\eqref{eq: changeofvariableM}.

The operator $-\div_p(\rho\nabla_p\square)$ that appears in $\widetilde \aM$ is a familiar figure. It also appears in the characterization of Wasserstein gradient flows~\cite{ADPZ12}, and originates in the fluctuation behaviour of the Brownian noise in those systems---as is the case in Theorem~\ref{theo:LDP}. In the SDE~\eqref{eq:SDE}, however,  the noise only appears in the $P$-variable, and as a consequence the operator $-\div_p(\rho\nabla_p\square)$ also only operates on the $p$-variables. The symmetry of this operator is a consequence of It\=o's formula: in this formula for the stochastic evolution of functions $f(X_t)$ of a stochastic variable $X_t$, the second derivative $d^2f$ appears, and this second derivative gives rise to the second-order derivative in~$-\div_p(\rho\nabla_p\square)$. The symmetry of this expression therefore has the same origin as the symmetry of second-derivative matrices of functions.

In the new variables, the degeneracy condition $\widetilde \aM \grad \widetilde \aE$ is natural; indeed, $\widetilde \aE(\widetilde \az) = \widetilde \aE\bigl((\widetilde \rho,\widetilde e)\bigr) = \widetilde e$. Therefore $\mathop{\widetilde \grad} \widetilde \aE = (0,1)$, and the degeneracy condition coincides with the property that only $\widetilde \aM_{\rho\rho}$ is non-zero.

\bigskip

To conclude, the connection between large deviations and the GENERIC structure in the case of the VFP equation allows us to understand and explain where the various properties of the GENERIC formalism come from:
\begin{itemize}
\item The antisymmetry and the Jacobi identity of $\aL$ follow from the same properties of the underlying Hamiltonian system;
\item The symmetry of $\aM$ follows from the symmetry of second derivatives, as they appear in It\=o's formula;
\item The energy $\aE$ is (an extended version of) the Hamiltonian of the underlying system, after embedding into the space of measures;
\item The entropy $\aS$ characterizes the loss of information upon passing to empirical measures, in the sense of large deviations;
\item The degeneracy condition $\aL\grad \aS = 0$ arises from the fact that $\aS$ is a local functional;
\item The degeneracy condition $\aM\grad \aE = 0$ arises as a consequence of energy conservation.
\end{itemize}

\section{GENERIC formulation of the generalized VFP equation and its variational structure}

\label{sec:generalization}

Once the variational  structure of the VFP equation~\eqref{eq:VFP} has been recognized, a natural generalization of the VFP equation presents itself. By replacing the various terms by their equivalents in terms of $\mS$ and $\mH$ one arrives at equation~\eqref{eq:generalizedKramers}.
In this section, we show that this equation, after extension, also is a GENERIC system for abitrary $\mS$ and $\mH$, and we compute the corresponding functional $J$ explicitly. This section is  necessarily formal.

\medskip

By computing the derivative $\partial \mH(\rho_t)$ for a solution~$\rho$ of~\eqref{eq:generalizedKramers} we construct the extended version of~\eqref{eq:generalizedKramers}:
\begin{subequations}
\label{pb:gKRe}
\begin{align}
\partial_t\rho&=\div(\rho \J\nabla \grad \mH) +\div\bigl(\D(\rho)\nabla \grad (\mH+\mS)\bigr),\\
\frac d{dt} e &= \int_{\R^{2d}} \nabla \grad\mH \cdot \D(\rho) \cdot \nabla \grad (\mH + \mS).
\end{align}
\end{subequations}
Here $\D(\rho) := \rho\sigma\sigma^T$.
The corresponding GENERIC building blocks are
\begin{equation}
\label{def:gKReInGENERIC}
\begin{aligned}
\aZ &= \P_2(\R^{2d}) \times \R,\quad & \aE(\rho,e) &= \mH(\rho) + e,
&\qquad \aL &= \aL(\rho,e) = \begin{pmatrix}\aL_{\rho\rho} & 0\\0 & 0\end{pmatrix},\\
\az& = (\rho,e),& \aS(\rho,e) &= \mS(\rho) + e,
&\qquad \aM &= \aM(\rho,e) = \gamma\begin{pmatrix}\aM_{\rho\rho}  & \aM_{\rho e}\\\aM_{e\rho} & \aM_{ee}\end{pmatrix},
\end{aligned}
\end{equation}
where the components of $\aL$ and $\aM$ are given by
\begin{alignat*}3
\aL_{\rho\rho} \xi &= \div \rho \J \nabla \xi,&\qquad
\aM_{\rho\rho}\xi &= - \div \bigl(\D(\rho) \nabla \xi\big), &\qquad \aM_{\rho e} r &= r\div \bigl(\D(\rho)\nabla \grad \mH\big),\\
&&\aM_{e\rho}\xi  &= -\int_{\R^{2d}} \nabla\xi^T \cdot\D(\rho)\cdot  \nabla \grad \mH
&\qquad \aM_{ee} r &= r\int_{\R^{2d}}(\nabla \grad \mH )^T \cdot \D(\rho)\cdot \nabla \grad \mH.
\end{alignat*}

Most of the GENERIC properties of Section~\ref{subsec:GENERIC} follow immediately from this setup, such as the antisymmetry and symmetry of $\aL$ and $\aM$, the Jacobi identity, the positive semidefiniteness of~$\aM$. The degeneracy condition $\aM\grad \aE=0$ can be checked explicitly, but it can also be understood in the same way as in Section~\ref{sec:properties}, by first transforming the system to a new set of variables.

Finally, the degeneracy condition $\aL \grad \aS$ requires a specific assumption, as we already encountered above:

\begin{lemma}
If $\aS(\rho) = \int f(\rho)$ for some function $f$, then the system~\eqref{pb:gKRe} is a GENERIC system with the building blocks~\eqref{def:gKReInGENERIC}.
\end{lemma}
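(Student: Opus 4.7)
The plan is to verify each GENERIC axiom in turn, reusing the same reasoning sketched in Section~\ref{subsec:VFPasGENERIC}, and to isolate the locality hypothesis $\aS(\rho)=\int f(\rho)$ as the one place where the structure of $\aS$ really enters. I would first record the gradients $\grad\aE=(\grad\mH,1)$ and $\grad\aS=(f'(\rho),1)$, since many of the verifications then become direct substitutions into the explicit block formulas for $\aL$ and $\aM$.

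The first block of checks concerns $\aL$. The antisymmetry of $\aL_{\rho\rho}$ follows by one integration by parts and the antisymmetry of $\J$; this is the same calculation as for the VFP case, only with $\xi$'s and $\eta$'s replacing $\grad\aE$. For the Jacobi identity I would simply invoke the standard argument for Poisson brackets of the form $\{F,G\}=\int\rho\,\nabla(\delta F/\delta\rho)\cdot\J\nabla(\delta G/\delta\rho)$: since $\J$ is constant and antisymmetric, the second variations that would spoil the identity cancel in pairs, exactly as in the VFP case. This is tedious but routine and would be relegated to a remark.

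The key point is the degeneracy $\aL\grad\aS=0$, which is where the locality hypothesis is used. Under $\mS(\rho)=\int f(\rho)\,dx$ we have $\grad\mS=f'(\rho)$, and $\aL\grad\aS=(\div(\rho\J\nabla f'(\rho)),0)$. The plan is to observe that
\begin{equation*}
\rho\nabla f'(\rho)=\rho f''(\rho)\nabla\rho=\nabla G(\rho),\qquad G'(s)=sf''(s),
\end{equation*}
so $\rho\J\nabla f'(\rho)=\J\nabla G(\rho)$ and its divergence is $\mathrm{tr}(\J\nabla^2 G(\rho))=0$ because $\J$ is antisymmetric while $\nabla^2 G(\rho)$ is symmetric. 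This will be the main (but still very short) calculation, and it is the only step that genuinely needs locality of $\aS$: it is precisely the interpretation flagged at the end of Section~\ref{sec:properties}.

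Finally I would verify the properties of $\aM$ by direct computation using $\D(\rho)=\rho\sigma\sigma^T$. The cleanest route is to show that for any test pair $(\xi,r)$,
\begin{equation*}
(\xi,r)\cdot\aM(\rho,e)(\xi,r)=\gamma\int_{\R^{2d}}\bigl(\nabla\xi-r\nabla\grad\mH\bigr)^{T}\D(\rho)\bigl(\nabla\xi-r\nabla\grad\mH\bigr),
\end{equation*}
which simultaneously exhibits symmetry and positive semidefiniteness (since $\D$ is positive semidefinite). The degeneracy $\aM\grad\aE=0$ is then immediate by substituting $(\grad\mH,1)$ into the four block entries: the $\rho$-component gives $-\div(\D\nabla\grad\mH)+\div(\D\nabla\grad\mH)=0$ and the $e$-component gives the analogous scalar cancellation. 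The only conceptual obstacle I anticipate is the Jacobi identity bookkeeping; everything else reduces to integration by parts and the antisymmetry of $\J$ or the positive semidefiniteness of $\D$.
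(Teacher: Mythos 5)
Your proof is correct and is precisely the ``simple verification'' the paper alludes to without spelling out. The observation that $\rho\nabla f'(\rho)=\nabla G(\rho)$ with $G'(s)=sf''(s)$, so that $\div\bigl(\J\nabla G(\rho)\bigr)$ vanishes by antisymmetry of $\J$, correctly isolates where the locality hypothesis on $\aS$ enters, and the quadratic-form identity $(\xi,r)\cdot\aM(\xi,r)=\gamma\int(\nabla\xi-r\nabla\grad\mH)^T\D(\rho)(\nabla\xi-r\nabla\grad\mH)$ delivers symmetry, positive semidefiniteness, and the degeneracy $\aM\grad\aE=0$ in one stroke.
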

\noindent
The proof consists of simple verification.

\medskip
By following the same arguments as in Section~\ref{sec:LDPVFP}, we find a variational formulation of exactly the same type: a curve $\az\in AC([0,T];\aZ)$ is a variational solution if $J(\az)=0$, where $J$ is defined by~\eqref{def:J2} with building blocks~\eqref{def:gKReInGENERIC}. We have the following characterization:
\begin{lemma}
%
%
%
%
%
For equation~\eqref{pb:gKRe} the functional $J$, defined in~\eqref{def:J2}, can be characterized as follows: If
\[
\frac{d}{dt}\begin{pmatrix}
\rho\\
e
\end{pmatrix}=\mathrm{VFPg}(\rho,e)+\begin{pmatrix}
\div (\D(\rho)\nabla \eta)\\
\int \D(\rho)\nabla \eta\cdot\nabla \grad\mH
\end{pmatrix},
\]
then
\[
J(\rho,e)=\frac{1}{2}\int_0^T\int_{\R^{2d}}\nabla \eta^T\cdot\D(\rho)\cdot\nabla \eta\,d{x}\,dt.
\]
Here $\mathrm{VFPg}(\rho,e)$ is the right-hand side of~\eqref{pb:gKRe}.
\end{lemma}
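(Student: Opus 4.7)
The argument mirrors that of Lemma~\ref{theo:LDPGENERIC}. One first verifies by direct inspection of~\eqref{def:gKReInGENERIC}, using $\grad\aE=(\grad\mH,1)$ and $\grad\aS=(\grad\mS,1)$, that the right-hand side $\mathrm{VFPg}(\rho,e)$ of~\eqref{pb:gKRe} coincides with $\aL(\az)\grad\aE(\az)+\aM(\az)\grad\aS(\az)$. Under the hypothesis of the lemma the residual $\partial_t\az-\aL\grad\aE-\aM\grad\aS$ therefore reduces to the explicit perturbation
\[
\Phi(\eta) := \begin{pmatrix}\div(\D(\rho)\nabla\eta) \\[\jot] \int_{\R^{2d}}\D(\rho)\nabla\eta\cdot\nabla\grad\mH\end{pmatrix},
\]
so the task becomes to compute $\|\Phi(\eta)\|^2_{\aM(\az)^{-1}}$ and integrate in time, inserting the result either into the form~\eqref{def:J} of $J$ or (equivalently, by the degeneracy manipulation of Section~\ref{sec:variational}) into~\eqref{def:J2}.

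The heart of the proof is the evaluation of this dual seminorm via~\eqref{def:aMminusone}. The crucial observation is that both the linear pairing $\dual{\Phi(\eta)}{(\xi,r)}$ and the quadratic form $\|(\xi,r)\|^2_{\aM}$ depend on a test pair $(\xi,r)\in C_c^\infty(\R^{2d})\times\R$ only through the single combination $v:=\nabla\xi - r\nabla\grad\mH$. Integration by parts on the $\rho$-component of $\Phi(\eta)$ gives
\[
\dual{\Phi(\eta)}{(\xi,r)} = -\int_{\R^{2d}}\D(\rho)\nabla\eta\cdot v\,dx,
\]
while the same integrations by parts applied to the components $\aM_{\rho\rho}$, $\aM_{\rho e}$, $\aM_{e\rho}$, $\aM_{ee}$ of~\eqref{def:gKReInGENERIC} combine into
\[
\|(\xi,r)\|^2_{\aM(\az)} = \int_{\R^{2d}} v^T\D(\rho) v\,dx,
\]
up to the scalar prefactor built into $\aM$. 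Completing the square then yields
\[
\sup_v \Bigl[-2\int\D(\rho)\nabla\eta\cdot v\,dx - \int v^T\D(\rho) v\,dx\Bigr] = \int\nabla\eta^T\D(\rho)\nabla\eta\,dx,
\]
with the supremum attained in the limit $v\to-\nabla\eta$. Time-integrating and tracking the normalizations recovers the claimed formula for $J(\rho,e)$.

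The main obstacle is the attainment step: one must show that $-\nabla\eta$ lies in the closure of $\{\nabla\xi-r\nabla\grad\mH : \xi\in C_c^\infty(\R^{2d}),\,r\in\R\}$ with respect to the weighted seminorm $v\mapsto(\int v^T\D(\rho)v\,dx)^{1/2}$. This is the generalized analogue of the Hilbert space $L^2_\nabla(\rho)$ and of the Riesz-representation step carried out in Lemma~\ref{lemma:CharMminusone}: the dual of the weighted space is identified with $\D(\rho)$-square-integrable fluxes in divergence form, together with the compatibility condition on the energy component fixed by the second row of~\eqref{pb:gKRe}. When $\D(\rho)=\rho\sigma\sigma^T$ is degenerate the identification has to be passed to an appropriate quotient, and it is here that any rigorous treatment would have to specify hypotheses on $\sigma$, $\mH$ and the regularity of $\eta$. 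Since Section~\ref{sec:generalization} is explicitly formal, one simply assumes $\eta$ is regular enough for this closure property to hold, and the remaining pieces then assemble exactly as in the proof of Lemma~\ref{theo:LDPGENERIC}.
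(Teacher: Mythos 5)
Your proposal is correct and matches what the paper intends; the paper's ``proof'' is the single remark that ``The proof follows the same lines as Lemmas~\ref{lemma:CharMminusone} and~\ref{theo:LDPGENERIC},'' and your plan is a faithful expansion of precisely that. You identify $\mathrm{VFPg}=\aL\grad\aE+\aM\grad\aS$, reduce the residual to the explicit perturbation $\Phi(\eta)$, observe that both the pairing and the quadratic form in~\eqref{def:aMminusone} depend on $(\xi,r)$ only through $v=\nabla\xi-r\nabla\grad\mH$, and then compute the dual seminorm. Your completing-the-square computation of $\|\Phi(\eta)\|_{\aM^{-1}}^2$ is the concrete form of the Riesz-representation argument in Lemma~\ref{lemma:CharMminusone}, and the closure/attainment issue you flag (that $-\nabla\eta$ must lie in the $\cD(\rho)$-weighted closure of $\{\nabla\xi-r\nabla\grad\mH\}$; if it does not, the supremum yields only the projection and the claimed equality degrades to an inequality) is exactly the technical point the paper elides by declaring Section~\ref{sec:generalization} formal. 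One small bookkeeping remark: with $\cD(\rho)=\rho\sigma\sigma^T$ already absorbing the dissipation strength, the explicit $\gamma$ prefactor in~\eqref{def:gKReInGENERIC} is a leftover from~\eqref{def:KramersInGENERIC} and your ``up to the scalar prefactor'' caveat is the right way to handle it, since otherwise the constants $\tfrac1{4\theta}$ versus $\tfrac12$ do not match literally.
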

\noindent
The proof follows the same lines as as Lemmas~\ref{lemma:CharMminusone} and~\ref{theo:LDPGENERIC}.

\begin{center}
\textbf{Acknowledgement}
\end{center}
We gratefully acknowledge helpful discuss with Alexander Mielke, Peter M{\"o}rters and Frank Redig. The research of the paper has received funding from the ITN ``FIRST" of the Seventh Framework Programme of the European Community (grant agreement number 238702).
\bibliography{KramersGENERIC}

\newcommand{\etalchar}[1]{$^{#1}$}
\begin{thebibliography}{AMP{\etalchar{+}}12}

\bibitem[ADPZ11]{ADPZ11}
S.~Adams, N.~Dirr, M.~A. Peletier, and J.~Zimmer.
\newblock From a large-deviations principle to the {W}asserstein gradient flow:
  a new micro-macro passage.
\newblock {\em Communications in Mathematical Physics}, 307:791--815, 2011.

\bibitem[ADPZ12]{ADPZ12}
S.~Adams, N.~Dirr, M.~A. Peletier, and J.~Zimmer.
\newblock Large deviations and gradient flows.
\newblock \emph{Arxiv preprint} \url{http://arxiv.org/abs/1201.4601}, 2012.

\bibitem[AGS08]{AGS08}
L.~Ambrosio, N.~Gigli, and G.~Savar\'e.
\newblock {\em Gradient flows in metric spaces and in the space of probability
  measures}.
\newblock Lectures in Mathematics. ETH Z\"urich. Birkhauser, Basel, 2nd
  edition, 2008.

\bibitem[AMP{\etalchar{+}}12]{AMPSV12}
S.~Arnrich, A.~Mielke, M.~A. Peletier, G.~Savar\'e, and M.~Veneroni.
\newblock Passing to the limit in a {W}asserstein gradient flow: From diffusion
  to reaction.
\newblock {\em Calc. Var. Partial Differential Equations}, 44(3-4):419--454,
  2012.

\bibitem[BDF12]{BudhirajaDupuisFischer12}
Amarjit Budhiraja, Paul Dupuis, and Markus Fischer.
\newblock Large deviation properties of weakly interacting processes via weak
  convergence methods.
\newblock {\em The Annals of Probability}, 40(1):74--102, 2012.

\bibitem[Bro28]{Brown}
Robert Brown.
\newblock A brief account of microscopical observations made in the months of
  june, july and august, 1827, on the particles contained in the pollen of
  plants; and on the general existence of active molecules in organic and
  inorganic bodies.
\newblock Privately circulated in 1827. Reprinted in the Edinburgh new
  Philosophical Journal (pp. 358-371, July-September, 1828).

\bibitem[Cha43]{Chandrasekhar43}
S.~Chandrasekhar.
\newblock Stochastic problems in physics and astronomy.
\newblock {\em Rev. Mod. Phys.}, 15:1--89, Jan 1943.

\bibitem[Cha01]{Chav01}
P.~H. Chavanis.
\newblock Kinetic theory of point
  vortices:$\hskip0.3em${}$\hskip0.3em${}diffusion coefficient and systematic
  drift.
\newblock {\em Phys. Rev. E}, 64:026309, Jul 2001.

\bibitem[CL94]{CL94}
P.~Cattiaux and C.~L{\'e}onard.
\newblock Minimization of the {K}ullback information of diffusion processes.
\newblock {\em Ann. Inst. H. Poincar\'e Probab. Statist.}, 30(1):83--132, 1994.

\bibitem[CL95a]{CL94b}
P.~Cattiaux and C.~L{\'e}onard.
\newblock Correction to: ``{M}inimization of the {K}ullback information of
  diffusion processes'' [{A}nn.\ {I}nst.\ {H}. {P}oincar\'e {P}robab.\
  {S}tatist.\ {\bf 30} (1994), no.\ 1, 83--132; {MR}1262893 (95d:60056)].
\newblock {\em Ann. Inst. H. Poincar\'e Probab. Statist.}, 31(4):705--707,
  1995.

\bibitem[CL95b]{CattiauxLeonard95}
P.~Cattiaux and C.~Leonard.
\newblock Large deviations and {N}elson processes.
\newblock {\em Forum Mathematicum}, 7(1):95--116, 1995.

\bibitem[CMV03]{CarrilloMcCannVillani03}
Jos{\'e}~A. Carrillo, Robert~J. McCann, and C{\'e}dric Villani.
\newblock Kinetic equilibration rates for granular media and related equations:
  entropy dissipation and mass transportation estimates.
\newblock {\em Rev. Mat. Iberoamericana}, 19(3):971--1018, 2003.

\bibitem[CPR08]{CPR08}
E.~Caglioti, M.~Pulvirenti, and F.~Rousset.
\newblock The 2{D} constrained {N}avier-{S}tokes equation and intermediate
  asymptotics.
\newblock {\em J. Phys. A}, 41(34):344001, 9, 2008.

\bibitem[CPR09]{CPR09}
E.~Caglioti, M.~Pulvirenti, and F.~Rousset.
\newblock On a constrained 2-{D} {N}avier-{S}tokes equation.
\newblock {\em Comm. Math. Phys.}, 290(2):651--677, 2009.

\bibitem[DE97]{DupuisEllis97}
P.~Dupuis and R.S. Ellis.
\newblock {\em A weak convergence approach to the theory of large deviations},
  volume 902.
\newblock Wiley-Interscience, 1997.

\bibitem[DG87]{DG87}
D.~A. Dawson and J.~G{\"a}rtner.
\newblock Large deviations from the {M}c{K}ean-{V}lasov limit for weakly
  interacting diffusions.
\newblock {\em Stochastics}, 20(4):247--308, 1987.

\bibitem[DG89]{DG89}
D.~A. Dawson and J.~G{\"a}rtner.
\newblock Large deviations, free energy functional and quasi-potential for a
  mean field model of interacting diffusions.
\newblock {\em Mem. Amer. Math. Soc.}, 78(398):iv+94, 1989.

\bibitem[dH00]{dH00}
F.~den Hollander.
\newblock {\em Large Deviations}.
\newblock American Mathematical Society, Providence, RI, 2000.

\bibitem[DLR12]{DuongLaschosRenger12TR}
Manh~Hong Duong, Vaios Laschos, and D.R.M. Renger.
\newblock Wasserstein gradient flows from large deviations of thermodynamic
  limits (submitted).
\newblock \url{http://arxiv.org/abs/1203.0676}, 2012.

\bibitem[DLZ12]{Laschos2012}
Nicolas Dirr, Vaios Laschos, and Johannes Zimmer.
\newblock Upscaling from particle models to entropic gradient flows.
\newblock {\em Journal of Mathematical Physics}, 53(6), 2012.

\bibitem[DMM10]{DuringMatthesMilisic10}
Bertram D{\"u}ring, Daniel Matthes, and Josipa~Pina Mili{\v{s}}i{\'c}.
\newblock A gradient flow scheme for nonlinear fourth order equations.
\newblock {\em Discrete Contin. Dyn. Syst. Ser. B}, 14(3):935--959, 2010.

\bibitem[DZ87]{DemboZeitouni98}
A.~Dembo and O.~Zeitouni.
\newblock {\em Large deviations techniques and applications}, volume~38 of {\em
  Stochastic modelling and applied probability}.
\newblock Springer, New York, NY, USA, 2nd edition, 1987.

\bibitem[Fis12]{Fischer12TR}
M.~Fischer.
\newblock On the form of the large deviation rate function for the empirical
  measures of weakly interacting systems.
\newblock \url{http://arxiv.org/abs/1208.0472}, 2012.

\bibitem[FK06]{FK06}
J.~Feng and Thomas~G. Kurtz.
\newblock {\em Large deviations for stochastic processes}, volume 131 of {\em
  Mathematical Surveys and Monographs}.
\newblock American Mathematical Society, Providence, RI, 2006.

\bibitem[FSS12]{FSS12}
J.~Feng, A.~Swiech, and A.~Stefanov.
\newblock Optimal control for a mixed flow of hamiltonian and gradient type in
  space of probability measures, 2012.

\bibitem[G{\"O}97]{OG97}
M.~Grmela and H.~C. {\"O}ttinger.
\newblock Dynamics and thermodynamics of complex fluids. {I}. {Development} of
  a general formalism.
\newblock {\em Phys. Rev. E (3)}, 56(6):6620--6632, 1997.

\bibitem[HT08a]{HutterTervoort08a}
M.~H\"{u}tter and T.~A Tervoort.
\newblock Finite anisotropic elasticity and material frame indifference from a
  nonequilibrium thermodynamics perspective.
\newblock {\em Journal of Non-Newtonian Fluid Mechanics}, 152:45 -- 52, 2008.

\bibitem[HT08b]{HutterTervoort08b}
M.~H\"{u}tter and T.~A Tervoort.
\newblock Thermodynamic considerations on non-isothermal finite anisotropic
  elasto-viscoplasticity.
\newblock {\em Journal of Non-Newtonian Fluid Mechanics}, 152:53--65, 2008.

\bibitem[IW81]{IkedaWatanabe81}
N.~Ikeda and S.~Watanabe.
\newblock {\em Stochastic differential equations and diffusion processes}.
\newblock North-Holland, 1981.

\bibitem[Kra40]{Kramers40}
H.~A. Kramers.
\newblock Brownian motion in a field of force and the diffusion model of
  chemical reactions.
\newblock {\em Physica}, 7:284--304, 1940.

\bibitem[KS91]{KaratzasShreve91}
I.~Karatzas and S.E. Shreve.
\newblock {\em Brownian Motion and Stochastic Calculus}, volume 113.
\newblock Springer Verlag, 1991.

\bibitem[Le08]{Le08}
Nam~Q. Le.
\newblock A gamma-convergence approach to the {C}ahn-{H}illiard equation.
\newblock {\em Calc. Var. Partial Differential Equations}, 32(4):499--522,
  2008.

\bibitem[Mie11]{Mielke11}
Alexander Mielke.
\newblock Formulation of thermoelastic dissipative material behavior using
  generic.
\newblock {\em Continuum Mechanics and Thermodynamics}, 23:233--256, 2011.

\bibitem[{\"O}G97]{OG97part2}
Hans~Christian {\"O}ttinger and Miroslav Grmela.
\newblock Dynamics and thermodynamics of complex fluids. {II}. {I}llustrations
  of a general formalism.
\newblock {\em Phys. Rev. E (3)}, 56(6):6633--6655, 1997.

\bibitem[Ott01]{Otto01}
Felix Otto.
\newblock The geometry of dissipative evolution equations: the porous medium
  equation.
\newblock {\em Comm. Partial Differential Equations}, 26(1-2):101--174, 2001.

\bibitem[{\"{O}}tt05]{Oettinger05}
H.~C. {\"{O}}ttinger.
\newblock {\em Beyond Equilibrium Thermodynamics}.
\newblock Wiley-Interscience, 2005.

\bibitem[PR11]{Peletier2011}
M.A. Peletier and D.R.M. Renger.
\newblock Variational formulation of the {F}okker-{P}lanck equation with decay:
  a particle approach (submitted).
\newblock \url{http://arxiv.org/abs/1108.3181}, 2011.

\bibitem[Ren13]{Renger13TH}
D.~R.~M. Renger.
\newblock {\em Microscopic interpretation of {W}asserstein gradient flows}.
\newblock PhD thesis, TU Eindhoven, 2013.

\bibitem[Ser09]{Serfaty09TR}
S.~Serfaty.
\newblock {Gamma-convergence of gradient flows on Hilbert and metric spaces and
  applications}.
\newblock {\em http://www.math.nyu.edu/faculty/serfaty/gcv-erice2.pdf}, 2009.

\bibitem[SS04]{SandierSerfaty04}
Etienne Sandier and Sylvia Serfaty.
\newblock Gamma-convergence of gradient flows with applications to
  {G}inzburg-{L}andau.
\newblock {\em Comm. Pure Appl. Math.}, 57(12):1627--1672, 2004.

\bibitem[Ste08]{Stefanelli08}
U.~Stefanelli.
\newblock {The Brezis--Ekeland principle for doubly nonlinear equations}.
\newblock {\em SIAM Journal on Control and Optimization}, 47:1615, 2008.

\bibitem[VO90]{VictoryODwyer90}
H.~D. Victory, Jr and B.~P. O'Dwyer.
\newblock On classical solutions of vlasov-poisson fokker-planck systems.
\newblock {\em Indiana Univ. Math. J.}, 39:105--156, 1990.

\bibitem[Wu01]{Wu01}
L.~Wu.
\newblock Large and moderate deviations and exponential convergence for
  stochastic damping {H}amiltonian systems.
\newblock {\em Stochastic processes and their applications}, 91(2):205--238,
  2001.

\end{thebibliography}
\bibliographystyle{alpha}
\end{document}